\renewcommand{\@Opargbegintheorem}[4]{%
  #4\trivlist\item[\hskip\labelsep{#3#2\@thmcounterend}]}
\begin{document}

\title{On the minimum doubly resolving set problem in line graphs}

\author{Qingjie Ye}

\authorrunning{Q. Ye}

\institute{School of Mathematical Sciences, Key Laboratory of MEA (Ministry of Education),
Shanghai Key Laboratory of PMMP, East China Normal University, Shanghai, China\\
\email{qjye@math.ecnu.edu.cn}
}

\maketitle

\begin{abstract}
   Given a connected graph $G$ with at least three vertices, let $d_G(u,v)$ denote the distance between vertices $u,v\in V(G)$. A  subset $S\subseteq V$ is called a doubly resolving set (DRS) of $G$ if for any two distinct vertices $u, v \in V(G)$, there exists a pair $\{x,y\}\subseteq S$ such that $d_G(u,x)-d_G(u,y)\neq d_G(v,x)-d_G(v,y)$. This paper studies the minimum cardinality of a DRS in the line graph of $G$, denoted by $\Psi(L(G))$. First, we prove that computing $\Psi(L(G))$ is NP-hard, even when $G$ is a bipartite graph. Second, we establish that  $\lceil \log_2 (1+\Delta(G))\rceil \le \Psi(L(G)) \le |V(G)| - 1$ holds for all $G$ with maximum degree  $\Delta(G)$, and show that both inequalities are tight. Finally, we determine the exact value of $\Psi(L(G))$ provided $G$ is a tree.

    \medskip
\noindent \textbf{Keywords: }  doubly resolving set, metric dimension, line graph, NP-hardness, tree.
\end{abstract}

\section{Introduction}
Let $G$ be a finite, connected, simple and undirected graph with vertex set $V=V(G)$ and edge set $E=E(G)$, where $|V|\ge2$. The distance, i.e., the number of edges in a shortest path, between vertices $u$ and $v$ is denoted by $d_G(u,v)$. 

The metric dimension problem was independently introduced by Slater~\cite{slater1975leaves}, Harary and Melter~\cite{harary1976metric}. A vertex subset $S(\subseteq V)$ resolves  $G$ if every vertex is uniquely determined by its vector of distances to the vertices in $S$. More formally, a vertex $x\in V$ \emph{resolves} two vertices $u,v\in V$ if $d_G(u,x)\ne d_G(v,x)$. A vertex subset $S$ is a \emph{resolving set} of $G$ if every two vertices in $G$ are resolved by some vertex of $S$. A resolving set $S$ of $G$ with the minimum cardinality is a \emph{metric basis} of $G$, and the minimum cardinality is referred to as the \emph{metric dimension} of $G$, denoted by $\mu(G)$. The \emph{metric dimension problem} is to determine $\mu(G)$ for an input graph $G$. The study of the metric dimension is motivated by its applications in diverse areas, including network discovery and verification~\cite{beerliova2005network}, robot navigation~\cite{khuller1996landmarks}, chemistry~\cite{chartrand2000resolvability}, and machine learning on biological data~\cite{tillquist2019low}.

Doubly resolving sets were introduced by C\'{a}ceres et~al.~\cite{caceres2007metric} as a tool for investigating the resolving sets of Cartesian products of graphs. 
Given $x,y,u,v\in V$, we say that $\{x,y\}$ \emph{doubly resolves} $\{u,v\}$, if $d_G(u,x)-d_G(u,y)\neq d_G(v,x)-d_G(v,y)$. A vertex subset $S$ of $G$ is a \emph{doubly resolving set} (DRS) of $G$ if every pair of distinct vertices in $G$ is doubly resolved by some pair of vertices in $S$.  Let $\Psi(G)$ denote the minimum cardinality of a DRS of $G$. The \emph{minimum doubly resolving set problem} (or minimum DRS problem for short) is to determine $\Psi(G)$ for an input graph $G$. The problem of finding a  minimum DRS is equivalent to locating the source of a diffusion in complex networks~\cite{chen2016approximation}.

Both the metric dimension problem and the minimum DRS problem are NP-hard in general. The hardness proof for the former problem was given in \cite{khuller1996landmarks}, and that for the latter problem was given in \cite{kratica2009computing}. Furthermore, Epstein et~al.~\cite{epstein2015weighted} proved that the metric dimension problem is NP-hard even for split graphs, bipartite graphs, co-bipartite graphs, and line graphs of bipartite graphs. 
Lu and Ye~\cite{lu2022bridge} proved that the minimum DRS problem is NP-hard even for split graphs, bipartite graphs, and co-bipartite graphs. The two problems have many interesting theoretical properties, which are out of the scope of this paper. The interested reader is referred, e.g. to \cite{bailey2011base,chen2016approximation,jernando2010extremal,jiang2019metric,kratica2012minimalB,laird2020resolvability,nasir2019edge,tillquist2023getting}.

Let $T$ be a tree, where  degree-1 vertices are called \emph{leaves} and vertices of degree 3 or above are called \emph{major vertices}. Let $u$ be a leaf and $v$ be a major vertex. We say that $u$ is a \emph{terminal vertex} of  $v$   if $d_T (u, v) < d_T (u,w)$ for every other major vertex $w$ of $T$. A major vertex is an \emph{exterior major vertex} (a \emph{strong exterior major vertex}) if it has at least a terminal vertex (a leaf neighbor) in $T$. Let $\sigma (T)$, $ex(T)$ and $ex'(T)$  denote the number of leaves, the number of exterior major vertices, and the number of strong exterior major vertices in $T$, respectively.
The metric dimension problem of a tree has been solved by Slater~\cite{slater1975leaves}, Harary and Melter~\cite{harary1976metric}, and Chartrand et~al.~\cite{chartrand2000resolvability}.
\begin{theorem}\label{rs}
    If $T$ is a tree that is not a path, then $\mu(T)=\sigma(T) - ex(T)$. If $T$ is a path, then $\mu(T) = 1$.
\end{theorem}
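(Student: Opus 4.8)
The plan is to treat the two cases of the statement separately and, for the main (non-path) case, to prove matching lower and upper bounds on $\mu(T)$. The path case is immediate: if $T$ is a path on $n\ge 2$ vertices, then a single endpoint $e$ already resolves $T$, since the distances $d_T(v,e)$ take the $n$ distinct values $0,1,\dots,n-1$; as $\mu(T)\ge 1$ whenever $|V|\ge 2$, this gives $\mu(T)=1$. For the non-path case the central objects are the exterior major vertices together with their \emph{legs}: for an exterior major vertex $v$ with terminal vertices $u_1,\dots,u_k$ (so $k=\mathrm{ter}(v)\ge 1$), the leg $L_i$ is the path from $v$ to $u_i$ with $v$ deleted. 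Legs attached to distinct exterior major vertices are pairwise disjoint, and since every leaf is a terminal vertex of a unique exterior major vertex, $\sum_v \mathrm{ter}(v)=\sigma(T)$, the sum running over all exterior major vertices.

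First I would prove the lower bound $\mu(T)\ge \sigma(T)-ex(T)$ by a local ``twin'' argument at each exterior major vertex $v$. If two legs $L_i,L_j$ of $v$ both avoid a resolving set $S$, pick $a\in L_i$ and $b\in L_j$ with $d_T(v,a)=d_T(v,b)$; then every vertex $s\notin L_i\cup L_j$ satisfies $d_T(s,a)=d_T(s,v)+d_T(v,a)=d_T(s,b)$, so no vertex of $S$ resolves $\{a,b\}$, a contradiction. Hence $S$ meets at least $\mathrm{ter}(v)-1$ of the legs of $v$. Summing these contributions over all exterior major vertices, and using the disjointness of their legs together with $\sum_v\mathrm{ter}(v)=\sigma(T)$, yields $|S|\ge \sum_v(\mathrm{ter}(v)-1)=\sigma(T)-ex(T)$.

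For the matching upper bound I would exhibit a resolving set of the right size: starting from the set of all leaves, delete exactly one terminal vertex of each exterior major vertex, obtaining $S$ with $|S|=\sigma(T)-ex(T)$. To show $S$ resolves $T$, I would first record a parity reduction: for any $z$, writing $c$ for the vertex where the $z$-to-$\{x,y\}$ path meets the $x$--$y$ path, $d_T(z,x)-d_T(z,y)=d_T(c,x)-d_T(c,y)$; consequently, if $d_T(x,y)$ is odd then \emph{every} vertex resolves $\{x,y\}$ (and $S\neq\varnothing$ since one checks $\sigma(T)\ge ex(T)+2$ for non-path trees). If $d_T(x,y)=2r$ is even, let $m$ be the midpoint; the same identity shows that the vertices resolving $\{x,y\}$ are exactly those in the two components $A$ (containing $x$) and $B$ (containing $y$) of $T-m$. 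Thus an unresolved pair would force $S\cap A=S\cap B=\varnothing$.

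The crux, and what I expect to be the main obstacle, is ruling this out: I must show that each of the arms $A,B$ necessarily contains a vertex of $S$. The key structural claim is that an $S$-free arm can contain no major vertex lying entirely inside it. Indeed, taking a deepest such major vertex $w$ (maximizing $d_T(m,w)$), its $\ge 2$ neighbors pointing away from $m$ start branches free of major vertices, hence bare paths ending in leaves that are terminal vertices of $w$; so $w$ has at least two terminal vertices, and our construction keeps all but one of them, placing a vertex of $S$ inside the arm. Therefore an $S$-free arm must be a bare leg of $m$ ending in the single terminal vertex of $m$ that we deleted. But then \emph{both} $A$ and $B$ would be deleted legs of the one exterior major vertex $m$, contradicting that we delete only one terminal vertex per exterior major vertex. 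This contradiction shows $S$ resolves every pair, completing the upper bound and hence the theorem. The delicate points to get right are the ``deepest major vertex'' extremal argument and the verification that all terminal vertices of $w$ indeed lie in the arm $A$.
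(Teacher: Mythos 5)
The paper itself offers no proof of this statement: Theorem~\ref{rs} is quoted as known background, with the proof attributed to Slater, Harary--Melter, and Chartrand et~al., so there is no in-paper argument to compare yours against. Measured against the classical proof, your route is the standard one and is essentially sound: the path case is immediate; the lower bound via ``two $S$-free legs at a common exterior major vertex yield an equidistant pair that no outside vertex resolves'' is correct (the legs are pairwise disjoint components of $T-v$, so the counts add); and the upper bound construction (all leaves minus one terminal vertex per exterior major vertex), together with the projection identity reducing the problem to showing that both arms $A$ and $B$ at the midpoint $m$ meet $S$, is exactly the classical argument. Your ``deepest major vertex'' claim is also correct as stated --- and note you do not even need \emph{all} terminal vertices of $w$ to lie in $A$: two of them in $A$, plus the fact that only one terminal vertex of $w$ is ever deleted, already force $S\cap A\neq\varnothing$.

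There is, however, one concrete gap at the very end. From ``an $S$-free arm contains no major vertex'' you jump to ``therefore it is a bare leg of $m$ ending in the deleted terminal vertex of $m$,'' and the contradiction you draw (both $A$ and $B$ would be deleted legs of the single exterior major vertex $m$) presupposes that the midpoint $m$ is a major vertex: legs and terminal vertices are defined only for vertices of degree at least $3$. If $d_T(m)=2$, the far-end leaf of an $S$-free arm is a terminal vertex of some major vertex \emph{beyond} $m$ (or of no vertex at all), and your final sentence does not apply as written. The patch is one line: if $d_T(m)=2$, then $T-m$ has exactly the two components $A$ and $B$; your structural claim makes both free of major vertices, so every vertex of $T$ has degree at most $2$ and $T$ is a path, contradicting the hypothesis. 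Only when $d_T(m)\ge 3$ does your ``two deleted legs of one vertex'' contradiction take over. A second, much smaller loose end is the assertion $\sigma(T)\ge ex(T)+2$, which you invoke only to get $S\neq\varnothing$ in the odd-distance case; it is true (if $T$ has a single major vertex it has degree $\ge 3$ and hence $\ge 3$ terminal vertices; otherwise the minimal subtree spanning the major vertices has at least two leaves, each of which is a major vertex with at least two terminal vertices), but since you rely on it, it deserves the half-line proof rather than ``one checks.'' With these two points supplied, the proof is complete.
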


For the minimum DRS problem on tree $T$, C\'{a}ceres et~al.~\cite{caceres2007metric} established $\Psi(T)=\sigma(T)$.

Graph $G$ is associated with its line graph, denoted $L(G)$, which is a graph where each vertex corresponds to an edge of $G$. Two vertices in $L(G)$ are adjacent if and only if their corresponding edges in $G$ share a common end vertex. We recall the NP-hardness result for the metric dimension problem in line graphs, as established by Epstein et~al.~\cite{epstein2015weighted}.
\begin{theorem} \label{th:mdhard}
    Given a bipartite graph $G$, determining $\mu(L(G))$ is NP-hard. 
\end{theorem}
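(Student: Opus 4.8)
The plan is to establish NP-hardness by a polynomial-time reduction from \textsc{3-SAT}, following the spirit of the original metric-dimension hardness argument of Khuller et~al.~\cite{khuller1996landmarks} but engineered so that the constructed graph is a line graph of a bipartite graph. The first step is to reformulate everything in terms of $G$ itself: a resolving set of $L(G)$ is a set $S$ of edges of $G$, and for distinct edges $e=u_1u_2$ and $f=v_1v_2$ the line-graph distance obeys
\[
d_{L(G)}(e,f)=1+\min_{i,j\in\{1,2\}} d_G(u_i,v_j).
\]
Thus $S$ resolves $L(G)$ exactly when, for every pair $e\ne f$, some edge of $S$ distinguishes them through these min-distances. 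Since not every graph is a line graph, I cannot realize an arbitrary target as $L(G)$; instead I would build the bipartite $G$ directly from the formula.

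Second, inside a bipartite host graph I would place a \emph{variable gadget} for each variable, containing a pair of edges that are \emph{twins} in $L(G)$, i.e.\ equidistant from every other edge. Any resolving set must then contain at least one edge from each such pair, and reading the selected edge as ``the literal made true'' yields a candidate truth assignment. For each clause I would attach a \emph{clause gadget} linked to the three literal edges it contains, arranged so that an internal pair of twin edges of the gadget becomes separated precisely when at least one of its three literal edges has been selected into $S$.

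Third, I would verify the two directions for the target budget $k$ equal to the number of variables plus a fixed set of anchor edges common to all instances. If the formula is satisfiable, selecting the true-literal edge in each variable gadget separates every clause gadget's twin pair, and the anchors separate all remaining pairs, giving a resolving set of size $k$. Conversely, a resolving set of size $k$ is forced to pick exactly one edge per variable pair, and separating every clause's twin pair forces these choices to satisfy every clause, recovering a satisfying assignment.

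The main obstacle is controlling the minimum in the distance formula. Because $d_{L(G)}$ depends on the \emph{smallest} endpoint-to-endpoint distance in $G$, a single unintended short connection between gadgets collapses many distances at once and can create spurious twins or destroy intended separations. I therefore expect the delicate part to be engineering the inter-gadget distances — through attached induced paths and a careful bipartition — so that the only short connections are the intended ones, the twin relations hold exactly as designed, and the forcing is airtight, all while keeping $G$ bipartite and of polynomial size.
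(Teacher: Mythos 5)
Your proposal is a plan, not a proof: in a reduction of this kind the entire mathematical content lives in the gadgets, and you never exhibit them. The paper establishes this statement by invoking the construction of Epstein et~al.~\cite{epstein2015weighted}, a reduction from 3-dimensional matching in which a concrete bipartite graph $G=(I\cup J,E)$ is built (thirteen explicit adjacency rules, budget $K=n+\lambda+4$) and both directions of the equivalence are verified against explicit distance computations in $L(G)$. In your write-up, by contrast, the variable gadget (``a pair of edges that are twins in $L(G)$''), the clause gadget (``an internal pair of twin edges \dots separated precisely when at least one of its three literal edges has been selected''), and the ``fixed set of anchor edges'' are specified only by the properties you want them to have, never by an actual bipartite graph. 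You then concede that the delicate part is engineering the inter-gadget distances so that the minimum in $d_{L(G)}(e,f)=1+\min_{i,j}d_G(u_i,v_j)$ creates no spurious twins and destroys no intended separations --- but that delicate part \emph{is} the proof. As written, there is no construction to check, so neither direction of the reduction can be verified.

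Two concrete obstacles show the gap is not a formality. First, a clause pair that is genuinely twin (equidistant from every other edge) can never be separated by anything, so your clause pairs must be near-twins separated \emph{only} by the three literal edges; simultaneously, your budget argument must prevent a resolving set from separating a clause pair by simply picking an edge inside that clause gadget instead of satisfying the clause. Arranging both of these under the min-distance formula, inside a graph that is forced to be the line graph of a bipartite graph, is precisely the difficulty that led Epstein et~al.\ to reduce from 3-dimensional matching rather than adapt the 3-SAT reduction of Khuller et~al.~\cite{khuller1996landmarks}. Second, the ``satisfiable implies resolving set of size $k$'' direction requires the chosen edges to resolve \emph{all} pairs of edges of $G$, not merely the clause pairs; your claim that ``the anchors separate all remaining pairs'' is asserted for anchors that were never defined, and in such constructions this global verification is typically the longest part of the argument. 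Until the gadgets and anchors are written down, shown to keep $G$ bipartite and polynomial-size, and these verifications carried out, the proposal does not establish the theorem.
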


Feng et~al.~\cite{feng2013metric} studied the metric dimension problem of line graphs and obtained the following two theorems.
\begin{theorem}\label{line}
    If $G$ is a connected graph with at least five vertices, then
    \[\lceil \log_2 \Delta(G)\rceil \le \mu(L(G)) \le |V(G)| - 2,\]
    where $\Delta(G)$ is the maximum degree of $G$.
\end{theorem}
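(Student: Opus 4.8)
The plan is to prove the two inequalities separately, since they call for entirely different ideas: a counting argument on a clique for the lower bound, and an explicit construction of a small resolving set for the upper bound.

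For the lower bound $\lceil \log_2 \Delta(G)\rceil \le \mu(L(G))$, I would start from a vertex $v$ of maximum degree $\Delta = \Delta(G)$ and observe that the $\Delta$ edges of $G$ incident to $v$ pairwise share the endpoint $v$, so the corresponding $\Delta$ vertices of $L(G)$ form a clique $K$. The key metric fact is that for any vertex $s$ of $L(G)$ and any two $a,b \in K$ we have $|d_{L(G)}(a,s) - d_{L(G)}(b,s)| \le d_{L(G)}(a,b) \le 1$, so the map sending each $a \in K$ to $d_{L(G)}(a,s)$ takes at most two distinct values over $K$. Hence a resolving set $S$ assigns to the $\Delta$ vertices of $K$ distance vectors lying in a set of size at most $2^{|S|}$; since these vectors must be pairwise distinct, $2^{|S|} \ge \Delta$, which gives $|S| \ge \lceil \log_2 \Delta\rceil$ and therefore the claimed bound.

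For the upper bound $\mu(L(G)) \le |V(G)| - 2$, I would exhibit a resolving set of $L(G)$ of size $n-2$, where $n = |V(G)|$. The tool I would rely on is the distance formula $d_{L(G)}(e,f) = 1 + \min\{d_G(x,y) : x \in e,\, y \in f\}$ for distinct edges $e \ne f$, which reduces distances in $L(G)$ to distances-to-edges in $G$. My candidate set is obtained from a spanning tree $T$ of $G$ by deleting one pendant (leaf) edge, leaving $n-2$ tree edges, which I take as landmarks. To verify that this resolves $L(G)$, I would take two distinct edges $e,f$ and produce a landmark tree edge whose distances to $e$ and $f$ differ, arguing by cases according to how $e$ and $f$ meet the vertices covered by the chosen landmarks, and using that the deleted edge is pendant, so the vertex it exposes is still reached through its unique tree neighbour.

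The main obstacle is this verification step for the upper bound: ensuring that deleting a single pendant edge never destroys resolvability and, crucially, choosing the spanning tree and the pendant edge so that this holds for every connected $G$ with $n \ge 5$. The hypothesis $n \ge 5$ is essential and must be used here, since for instance $L(K_4)$ is the octahedron, whose metric dimension is $3 > 4 - 2$; I would expect the case analysis to break down precisely for such small graphs, so part of the work is to isolate why $n \ge 5$ rules them out. A secondary point is to double-check the boundary behaviour of the distance formula when $e$ or $f$ coincides with, or is adjacent to, a landmark edge, since then the minimum in the formula equals $0$.
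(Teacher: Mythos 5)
First, a remark on the comparison you asked for: Theorem~\ref{line} is not proved in this paper at all; it is quoted from Feng et~al.~\cite{feng2013metric}, so there is no in-paper proof to match your proposal against. The closest in-paper material is the proof of Theorem~\ref{main}, the doubly-resolving analogue. Your lower-bound argument is correct and complete: the $\Delta(G)$ edges at a maximum-degree vertex form a clique in $L(G)$, every landmark sees the clique at only two consecutive distance values, so a resolving set $S$ produces at most $2^{|S|}$ distinct distance vectors on the clique, forcing $2^{|S|}\ge\Delta(G)$. This is exactly the argument the paper sharpens for Theorem~\ref{main}: there, the doubly-resolving property together with inclusion--exclusion cuts the count from $2^s$ to $2^s-1$, which is precisely where the stronger bound $\lceil\log_2(1+\Delta(G))\rceil$ comes from.

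The genuine gap is in the upper bound, which is where the content of the theorem lies. You propose the landmark set $E(T)\setminus\{e_0\}$ for a spanning tree $T$ and a pendant edge $e_0$, but you never carry out the verification, and you say yourself that choosing $T$ and $e_0$ correctly, and isolating the role of $n\ge 5$, ``is part of the work.'' That missing step is not routine bookkeeping: an arbitrary choice of spanning tree and pendant edge can fail even when $n\ge 5$. Concretely, let $G$ have vertices $1,\dots,5$ and edges $12,13,23,14,24,35$ (a $K_4^-$ on $\{1,2,3,4\}$ missing the edge $34$, plus a pendant vertex $5$ attached to $3$). Take the spanning tree $T$ given by the path $5,3,1,2,4$, so $E(T)=\{35,13,12,24\}$, and delete the pendant edge $e_0=35$. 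The remaining landmarks $\{13,12,24\}$ do \emph{not} resolve $L(G)$: each of the edges $23$ and $14$ shares an endpoint with every one of $13$, $12$, $24$, so both get the distance vector $(1,1,1)$, while neither is a landmark. (Deleting $24$ instead of $35$ does work here.) So the theorem's upper bound genuinely requires a selection rule for the tree and for the deleted edge, plus the deferred case analysis showing the rule succeeds for every connected $G$ with $n\ge 5$; without that, the inequality $\mu(L(G))\le |V(G)|-2$ is not established. It is telling that the paper's own upper-bound proof for Theorem~\ref{main} -- which keeps \emph{all} $n-1$ tree edges and proves they form a DRS -- still has to restrict the choice of spanning tree for $K_4$ and $K_4^-$ (forbidding $T=P_4$): in both settings, the careful choice you postponed is exactly the hard part of the proof.
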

\begin{theorem}\label{tree-rs}
    If $T$ is a tree, then $\mu(L(T))=\mu(T)$.
\end{theorem}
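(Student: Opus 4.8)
The plan is to prove the two inequalities $\mu(L(T)) \le \mu(T)$ and $\mu(L(T)) \ge \mu(T)$ separately, using throughout Theorem~\ref{rs}, which gives $\mu(T) = \sigma(T) - ex(T)$ when $T$ is not a path. If $T$ is a path, then $L(T)$ is again a path and both sides equal $1$, so I assume from now on that $T$ is not a path. The first ingredient is a distance formula: for distinct edges $e,f$ of $T$, a shortest $e$--$f$ walk in $L(T)$ traces the unique path in $T$ joining them, giving $d_{L(T)}(e,f) = 1 + \min\{d_T(x,y): x\in e,\ y\in f\}$. I record the special case I will need: if $\ell$ is a leaf and $e_\ell$ its incident edge, then $d_{L(T)}(e,e_\ell) = \min_{x\in e} d_T(x,\ell)$, the distance in $T$ from $\ell$ to the endpoint of $e$ nearest $\ell$. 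I will also use the vocabulary of Theorem~\ref{rs}: each leaf lies at the end of a \emph{leg} issuing from its unique exterior major vertex, these legs are pairwise edge-disjoint, and $\sum_v (t(v)-1) = \sigma(T)-ex(T)$, where the sum is over exterior major vertices $v$ and $t(v)$ is the number of legs of $v$.

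For the lower bound, let $S\subseteq E(T)$ resolve $L(T)$ and fix an exterior major vertex $v$ with legs $Q_1,\dots,Q_t$. For any two legs $Q_i,Q_j$ let $g_i,g_j$ be their edges incident to $v$; a short computation with the distance formula shows that every edge outside $Q_i\cup Q_j$ is equidistant from $g_i$ and $g_j$ in $L(T)$, so $S$ must contain an edge of $Q_i\cup Q_j$. Hence at most one leg of $v$ can avoid $S$, i.e. $S$ meets at least $t(v)-1$ of the legs of $v$. Since legs of distinct exterior major vertices are edge-disjoint, $|S| \ge \sum_v (t(v)-1) = \sigma(T)-ex(T) = \mu(T)$.

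For the upper bound I would take a metric basis $W$ of $T$ realizing Theorem~\ref{rs}, consisting of all but one terminal leaf at each exterior major vertex, and set $S = \{e_\ell : \ell\in W\}$, so $|S| = \mu(T)$. To show $S$ resolves $L(T)$, take distinct edges $e,f$ and let $Q\colon w_0 w_1\cdots w_m$ be the unique path in $T$ with $e=\{w_0,w_1\}$ and $f=\{w_{m-1},w_m\}$ (so $m\ge 2$). For a leaf $\ell$, let $w_k$ be the vertex of $Q$ nearest $\ell$; substituting into the distance formula shows that $e_\ell$ fails to distinguish $e$ from $f$ exactly when $2k=m$, that is, only when $\ell$ hangs off the midpoint $x=w_{m/2}$ of $Q$. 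Consequently $\{e,f\}$ is unresolved by $S$ only if every basis leaf attaches to $Q$ at this single vertex $x$, which forces both \emph{arms} of $Q$ (the two components of $T-x$ that meet $Q$) to be free of basis leaves; note also that some basis leaf does attach at $x$, so $\deg(x)\ge 3$ and $x$ is major.

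The hard part will be ruling this out, and this is the crux of the argument. Here I would exploit the structure of $W$: an excluded leaf always has an included sibling terminal to the same exterior major vertex. If an arm $C$ (a component of $T-x$) were basis-leaf-free, then every leaf of $C$ must be terminal to $x$ itself, for if some leaf of $C$ had its nearest major vertex in the interior of $C$, that major vertex would retain an included terminal leaf inside $C$ (the path to it cannot leave $C$ without crossing the major vertex $x$), giving a basis leaf in $C$. But then, since $W$ keeps all but one terminal leaf of $x$, the arm $C$ can contain only a single excluded terminal leaf of $x$ and is thus a single leg from $x$ to that leaf. Applying this to both arms yields two distinct excluded terminal leaves of the one vertex $x$, contradicting that $W$ excludes only one terminal leaf per exterior major vertex. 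Hence some basis leaf distinguishes $e$ and $f$, so $S$ resolves $L(T)$, and combining the two bounds gives $\mu(L(T))=\mu(T)$.
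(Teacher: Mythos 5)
The paper never proves this statement; it is quoted from Feng, Xu and Wang \cite{feng2013metric}, so your argument has to stand on its own, and most of it does. The distance formula $d_{L(T)}(e,f)=1+\min\{d_T(x,y):x\in e,\,y\in f\}$ is correct; the lower bound is correct (for two legs $Q_i,Q_j$ of an exterior major vertex $v$, every edge outside $Q_i\cup Q_j$ reaches both $g_i$ and $g_j$ through $v$, hence is equidistant from them, so any resolving set of $L(T)$ meets all but at most one leg of $v$, and edge-disjointness of legs gives $|S|\ge\sigma(T)-ex(T)$); and the midpoint analysis in the upper bound is also correct: $e_\ell$ fails to separate $e$ from $f$ exactly when the attachment vertex of $\ell$ on $Q$ is the midpoint $w_{m/2}$. (One degenerate caveat: for $T=P_2$ we have $L(T)=K_1$ with metric dimension $0\neq 1$, so the statement needs $|V(T)|\ge 3$, as the paper implicitly assumes throughout.)

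The genuine gap is in the crux step. You claim that if a basis-leaf-free arm $C$ of $T-x$ contained a leaf $u$ whose nearest major vertex $v_u$ lies in $C$, then $v_u$ ``would retain an included terminal leaf inside $C$.'' This is false when $t(v_u)=1$: the basis $W$ keeps all but one terminal leaf of \emph{each} exterior major vertex, so a major vertex with a single terminal leaf keeps none of them. Such vertices exist — take $v_u$ of degree $3$ whose only major-free branch is the leg to $u$, its remaining branch leading to a further major vertex $v'$ — and then your inference from ``$u\in C$ has $v_u\in C$'' to ``$C$ contains a basis leaf'' simply does not follow ($W$ may contain no terminal leaf of $v_u$ at all; in this example the basis leaf in $C$ comes from $v'$, not $v_u$). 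The statement you need is still true, but it requires choosing the major vertex differently: root $T$ at $x$ and let $v$ be a major vertex of $C$ at maximum depth. Every branch of $v$ away from $x$ then contains no major vertex, hence is a path ending at a leaf, so $v$ has at least two terminal leaves and $W$ keeps at least one of them; by your own parenthetical argument (terminality forces its path to $v$ to stay inside $C$) that kept leaf lies in $C$, contradicting basis-leaf-freeness. So a basis-leaf-free arm contains no major vertex and is a single leg of $x$, after which your endgame — two arms give two distinct excluded terminal leaves of the one exterior major vertex $x$, contradicting that $W$ excludes only one per vertex — goes through unchanged. With that one repair (``deepest major vertex of the arm'' in place of ``nearest major vertex of some leaf''), your proof is correct.
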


\paragraph{Our contributions.}
A natural question arises: Does the minimum DRS problem in line graphs admit conclusions similar to those in Theorems \ref{th:mdhard}, \ref{line}, and \ref{tree-rs}? This paper focuses on addressing this question affirmatively. 
\begin{itemize}
    \item In Section \ref{sec:np}, we prove that the minimum DRS problem is NP-hard for the line graphs of bipartite graphs (Theorem \ref{th:hard}) -- a counterpart to Theorem~\ref{th:mdhard}. Our proof is inspired by Epstein et~al.~\cite{epstein2015weighted}.
\item    In Section \ref{sec:linegraph}, complementary to Theorem \ref{line}, we provide similar yet distinct lower bound $\lceil \log_2 (1+\Delta(G))\rceil$ and upper bound $|V(G)|-1$ for $\Psi(L(G))$ (see Theorem~\ref{main}). Both bounds are shown to be tight. Moreover,  the inequalities in Theorem \ref{line} are valid only for the case $|V(G)|\ge5$; in contrast, our results also apply to the cases where $|V(G)| \in \{3,4\}$. Note that for a connected graph $G$, $|V(L(G))| \geq 2$ if and only if $|V(G)| \geq 3$, so our theorem covers all possible cases. 
\item In Section \ref{sec:tree}, we show that  $\Psi(L(T))=\sigma(T)-ex'(T)$ holds for all trees $T$ (see Theorem \ref{treethm}), which implies an $O(|V(T)|)$ time algorithm for finding a minimum DRS in $L(T)$. On the one hand, the equation shows the similarity to $\mu(L(T))= \sigma(T)-ex(T)$ when $T$ is not a path (as indicated by Theorems  \ref{rs} and \ref{tree-rs}), on the other hand, while $T$ and $L(T)$ share the same value of metric dimension, the sizes of their minimum DRSs differ by $ex'(T)$ because $\Psi(T)=\sigma(T)$ \cite{caceres2007metric}. For example $\Psi(K_{1,n})=n\neq n-1=\Psi(K_n)=\Psi(L(K_{1,n}))$. 
\end{itemize}

\section{NP-hardness}\label{sec:np}
In this section, we prove that the minimum DRS problem is NP-hard even for line graphs of bipartite graphs. The proof is an extension of the work in \cite{epstein2015weighted}, which showed the NP-hardness for the metric dimension problem. Following \cite{epstein2015weighted}, we also employ a reduction from the 3-dimensional matching problem, which is well-known to be NP-hard. 

The $3$-dimensional matching problem is defined as follows. Given three disjoint sets $A,B,C$ such that $|A|=|B|=|C|=n$, and a set of triples $S \subseteq A \times B \times C$, is there a subset $S'\subseteq S$ such that each element of $A \cup B \cup C$ occurs in exactly one of the triples in $S'$. Based on the instance $(A,B,C,S)$ of the $3$-dimensional matching problem, Epstein et~al.~\cite{epstein2015weighted} constructed a bipartite graph $G$ and established a connection between the solution of the 3-dimensional matching instance and the sizes of resolving sets in $L(G)$. In this paper, we use the same bipartite graph $G$ and relate the solution to the sizes of DRSs in $L(G)$. Our main contribution is to demonstrate that the resolving set they identified for the metric dimension problem on $L(G)$ is, in fact, a DRS of $L(G)$.

Let $N:=2^{12}n$ and $n':=nN$. Let $\mathcal{A}:=\bigcup_{i=1}^N A_i$, $\mathcal{B}:=\bigcup_{i=1}^N B_i$ and $\mathcal{C}:=\bigcup_{i=1}^N C_i$, where $A_i,B_i,C_i$ are the copies of $A,B,C$ respectively. Let $\mathcal{S}:=\bigcup_{i=1}^N S_i$, where $S_i\,(\subseteq A_i\times B_i\times C_i)$ is a copy of $S$. 
Let $\tau:=|\mathcal{S}|$ and $\lambda:=\lceil \log_2 \tau \rceil$. In the bipartite graph $G=(I\cup J,E)$ constructed by Epstein et~al.~\cite{epstein2015weighted}, the vertex set is partitioned into two independent sets:
\begin{eqnarray*}
    I&=&\mathcal{S} \cup \{s_{\mathcal{A}}, s_{\mathcal{B}}, s_{\mathcal{C}}, s_{\mathcal{D}}\} \cup \{d'_0,d'_1,\dots,d'_{\lambda-1}\},\\
    J&=&\mathcal{A}\cup \mathcal{B}\cup \mathcal{C}\cup \{s'_0,s'_1,\dots,s'_{\tau-1}\}\cup \{s'_{\mathcal{A}}, s'_{\mathcal{B}}, s'_{\mathcal{C}}, s'_{\mathcal{D}}\} \cup \{d_0,d_1,\dots,d_{\lambda-1}\}.
\end{eqnarray*}

For any $u \in J$ and $v \in I$, $uv$ is an edge in $ E$ if and only if $u$ and $v$ satisfy one of the thirteen following cases (see Fig. \ref{etunpc} in Appendix \ref{apx:graph} for an illustration): (1) $u \in \mathcal{A}$ and $v = s_{\mathcal{A}}$; (2)  $u \in \mathcal{B}$ and $v = s_{\mathcal{B}}$; (3) $u \in \mathcal{C}$ and $v = s_{\mathcal{C}}$; (4) $u \in \mathcal{A}\cup \mathcal{B}\cup \mathcal{C}$ and $v = s_{\mathcal{D}}$;
  (5) $u \in \{a,b,c\}$ and $v = (a,b,c) \in \mathcal{S}$;
    (6) $u = d_i$ and $v = s_j$ such that $\lfloor j/2^i \rfloor \bmod 2 = 1$;
    (7) $u = d_i$ and $v = s_{\mathcal{D}}$;
    (8) $u = s'_i$ and $v = s_i$;
   (9) $u = s'_{\mathcal{A}}$ and $v = s_{\mathcal{A}}$;
    (10) $u = s'_{\mathcal{B}}$ and $v = s_{\mathcal{B}}$;
    (11) $u = s'_{\mathcal{C}}$ and $v = s_{\mathcal{C}}$;
    (12) $u = s'_{\mathcal{D}}$ and $v = s_{\mathcal{D}}$; (13) $u = d_i$ and $v = d'_i$.

It is clear that the construction can be done in polynomial time. Let $K=n+\lambda+4$. Epstein et~al.~\cite{epstein2015weighted} has proved that there exists a three-dimensional matching $S'\subseteq S$ if and only if $L(G)$ has a resolving set $R$ such that $|R| \leq K$. A similar statement also holds for the minimum DRS problem.

\begin{lemma}\label{edrsnpc}
    \begin{enumerate}[(a)]
    \item If $L(G)$ has a DRS $R$ with $|R|\le K$, then there is a $3$-dimensional matching $S'\subseteq S$. \label{enpdrsa}
    \item If there is a $3$-dimensional matching $S'\subseteq S$, then the corresponding $\mathcal{S}'\subseteq \mathcal{S}$  gives rise to the set
    \[R=\{s_is_i':s_i\in \mathcal{S}'\}\cup \{s_{\mathcal{A}}s'_{\mathcal{A}}, s_{\mathcal{B}}s'_{\mathcal{B}}, s_{\mathcal{C}}s'_{\mathcal{C}}, s_{\mathcal{D}}s'_{\mathcal{D}}\}\cup \{d_0d'_0,d_1d'_1,\dots,d_{\lambda-1}d'_{\lambda-1}\},\]
    which is a DRS of $L(G)$ with $|R|=K$. \label{enpdrsb}
    \end{enumerate}
\end{lemma}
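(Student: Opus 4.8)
The plan is to treat the two implications separately, leveraging the elementary fact that \emph{every DRS is a resolving set}: if $\{x,y\}\subseteq R$ doubly resolves $\{u,v\}$ then $d_G(u,x)-d_G(v,x)\neq d_G(u,y)-d_G(v,y)$, so at least one of $x,y$ resolves $\{u,v\}$. Given this, part (a) is immediate from the equivalence of Epstein et~al.: a DRS $R$ of $L(G)$ with $|R|\le K$ is in particular a resolving set of $L(G)$ with $|R|\le K$, and the ``resolving set $\Rightarrow$ matching'' direction of their result produces the desired $S'$. The entire content therefore lies in part (b), namely in upgrading Epstein's resolving set $R$ to a doubly resolving set.

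For part (b), after the routine count $|R|=|\mathcal{S}'|+\lambda+4=K$, I would first normalize the distances in $L(G)$. The key observation is that every edge of $R$ has one endpoint that is a pendant vertex of $G$, namely one of $s'_i$, $s'_{\mathcal{A}},s'_{\mathcal{B}},s'_{\mathcal{C}},s'_{\mathcal{D}}$, or $d'_j$, each of degree one. Writing $a_x$ for the non-pendant endpoint (the \emph{anchor}) of $x\in R$ and using the line-graph identity $d_{L(G)}(e,f)=1+\min\{d_G(p,q):p\in e,\ q\in f\}$, the pendant property yields $d_{L(G)}(e,x)=1+\min_{p\in e}d_G(p,a_x)$ for every edge $e\neq x$. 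Hence, for any two edges $e,f$ neither of which lies in $R$, the coordinatewise difference of their distance vectors to $R$ equals $\bigl(d_G(e,w)-d_G(f,w)\bigr)_{w\in W}$, where $W:=\mathcal{S}'\cup\{s_{\mathcal{A}},s_{\mathcal{B}},s_{\mathcal{C}},s_{\mathcal{D}}\}\cup\{d_0,\dots,d_{\lambda-1}\}$ and $d_G(e,w):=\min_{p\in e}d_G(p,w)$. Since $R$ doubly resolves $\{e,f\}$ exactly when this vector is non-constant, and since $R$ is already known to be resolving (so the vector is never $\mathbf{0}$), it remains only to exclude the possibility that it is a nonzero constant $c\mathbf{1}$. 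The finitely many pairs in which $e$ or $f$ lies in $R$ are handled by the same estimates, since being incident to an anchor only lowers the corresponding coordinate.

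The heart of the argument is thus to rule out distinct edges $e,f$ with $d_G(e,w)-d_G(f,w)=c$ for a fixed $c\neq0$ and all $w\in W$; assume $c>0$, so $f$ is uniformly closer to every anchor. Two structural facts drive the exclusion. First, if $e$ were incident to some anchor $w$ then $d_G(e,w)=0$ would force $d_G(f,w)=-c<0$; inspecting the thirteen edge types, this leaves only $e=a s_j$ or $e=s_j s'_j$ with the triple $s_j\notin\mathcal{S}'$. Second, the hub $s_{\mathcal{D}}$, adjacent to all of $\mathcal{A}\cup\mathcal{B}\cup\mathcal{C}$ and to all $d_j$, satisfies $d_G(\cdot,s_{\mathcal{D}})\le2$, whence $c\le2$. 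I would then finish by a short case analysis exploiting that $\mathcal{S}'$ is a perfect matching: every element of $s_j$ is covered, so some selected triple $s_{i^\ast}\in\mathcal{S}'\subseteq W$ shares an element with $s_j$, keeping $d_G(e,s_{i^\ast})$ small and forcing $f$ to be incident to $s_{i^\ast}$; since $f$ cannot be incident to two vertices of the independent set $I$ at once (e.g.\ to both $s_{i^\ast}$ and $s_{\mathcal{D}}$), most configurations collapse, and the binary-addressing anchors $d_0,\dots,d_{\lambda-1}$ together with $s_{\mathcal{A}},s_{\mathcal{B}},s_{\mathcal{C}}$ dispose of the residual odd-shift case (an edge is incident to at most one $d_j$ and cannot be adjacent to all of $s_{\mathcal{A}},s_{\mathcal{B}},s_{\mathcal{C}}$ simultaneously).

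I expect this last exclusion to be the main obstacle: resolvability is inherited for free from Epstein et~al., but double resolution additionally forbids a uniform distance shift, and its impossibility rests on the interplay between the matching property of $\mathcal{S}'$, the hub $s_{\mathcal{D}}$, and the gadget $\{d_j\}$, with the bipartite, two-endpoints-per-edge structure being exactly what makes the required simultaneous incidences impossible. The difficulty is one of careful bookkeeping across edge types rather than any single deep idea; it is worth noting that the amplification to $N$ copies is irrelevant to (b) and is needed only for the size gap underlying (a).
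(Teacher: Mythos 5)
Your approach is correct, and it is genuinely different from the paper's. For part (a) the two arguments coincide (a DRS is a resolving set, so the forward direction of Epstein et al.\ applies verbatim). For part (b), however, the paper never reasons about difference vectors: it invokes a lemma of Lu and Ye stating that if $S$ is a resolving set, $x\in S$, and $T$ doubly resolves (together with $x$) every pair of vertices lying at \emph{distinct} distances from $x$, then $S\cup T$ is a DRS; it then verifies this property for $T=\{s_{\mathcal{A}}s'_{\mathcal{A}},s_{\mathcal{B}}s'_{\mathcal{B}},s_{\mathcal{C}}s'_{\mathcal{C}}\}\cup\{d_kd'_k\}$ and $x=s_{\mathcal{D}}s'_{\mathcal{D}}$ by partitioning $E(G)$ into the four distance classes $E_0,\dots,E_3$ around $s_{\mathcal{D}}s'_{\mathcal{D}}$ and exhibiting, for every cross-class pair, an explicit doubly resolving pair; notably, the matching edges $s_is'_i$ play no role in that verification and enter only through the resolving property. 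Your route instead translates all line-graph distances into graph distances to the anchors via the pendant-endpoint identity, recasts ``doubly resolved by $R$'' as ``non-constant difference vector,'' inherits non-vanishing from resolvability, and excludes a constant nonzero shift $c$ structurally. I checked that your exclusion does close: anchor-avoidance and the hub bound are correct as stated, and in the two surviving shapes of $e$ the matching property plus the independence of $I$ give contradictions (for $e=s_js'_j$, $c=2$ forces $f$ to be incident simultaneously to $s_{\mathcal{D}}$, $s_{\mathcal{A}}$ and a covering triple, while $c=1$ forces incidence to two distinct $d_k$ unless $j$ is $0$ or a power of $2$, and those residual cases die against $s_{\mathcal{A}},s_{\mathcal{B}},s_{\mathcal{C}}$; for $e=as_j$ the hub coordinate forces $c=1$ and then $f$ would have to be the non-edge $s_{\mathcal{A}}s_{\mathcal{D}}$). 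What each approach buys: the paper leans on a citable decomposition lemma and produces explicit resolving pairs, at the price of checking every cross-class pair; your argument is more self-contained and narrows the problematic pairs much faster. One caveat: your claim that pairs with $e\in R$ are ``handled by the same estimates'' is too quick, since the diagonal coordinate satisfies $d_{L(G)}(e,e)=0$ rather than $1+d_G(e,a_e)$; this forces $c\le-1$, and after swapping roles the far edge \emph{may} be incident to the anchor $a_e$, so anchor-avoidance fails at that one coordinate. These cases still collapse under the hub bound (e.g.\ $a_e\in\mathcal{S}'\cup\{s_{\mathcal{A}},s_{\mathcal{B}},s_{\mathcal{C}}\}$ gives $d_G(f,s_{\mathcal{D}})\ge3$, a contradiction, and $a_e\in\{s_{\mathcal{D}}\}\cup\{d_k\}$ needs one further coordinate), but they require their own short check rather than a citation of the generic estimate.
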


We provide the proof in Appendix \ref{apx:lma:edrsnpc}. Consequently, we establish the NP-hardness of computing $\Psi(L(G))$, even when $G$ is restricted to be a bipartite graph.
\begin{theorem} \label{th:hard}
    Given a bipartite graph $G$, determining $\Psi(L(G))$ is NP-hard. 
\end{theorem}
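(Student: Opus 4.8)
The plan is to exhibit a polynomial-time many-one reduction from the $3$-dimensional matching problem, whose NP-hardness is classical, to the decision version of the minimum DRS problem on line graphs of bipartite graphs. Given an instance $(A,B,C,S)$ with $|A|=|B|=|C|=n$, I would build the bipartite graph $G=(I\cup J,E)$ exactly as specified above and set the threshold $K:=n+\lambda+4$. Two preliminary checks are needed. First, $G$ is produced in time polynomial in the input size: since $N=2^{12}n$, $\tau=N|S|$, and $n'=nN$ are all polynomial in $n$ and $|S|$, the listed vertex and edge sets of $I$ and $J$ (hence $L(G)$) have polynomial size. Second, $G$ is genuinely bipartite, which is immediate because each of the thirteen edge cases joins a vertex of $J$ to a vertex of $I$, so $(I,J)$ is a valid bipartition.

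The core of the argument is then to combine the two halves of Lemma \ref{edrsnpc} into a single equivalence. By part (b), the existence of a $3$-dimensional matching $S'\subseteq S$ yields an explicit DRS of $L(G)$ of size exactly $K$, whence $\Psi(L(G))\le K$. Conversely, part (a) says that any DRS of $L(G)$ of size at most $K$ forces a $3$-dimensional matching to exist. Putting these together gives the characterization
\[
\Psi(L(G))\le K \quad\Longleftrightarrow\quad \text{there is a 3-dimensional matching } S'\subseteq S,
\]
so $(A,B,C,S)$ is a yes-instance of $3$-dimensional matching if and only if the minimum DRS of $L(G)$ has size at most $K$.

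Finally, I would convert this decision equivalence into the stated hardness of the optimization problem in the standard way. If one could compute $\Psi(L(G))$ in polynomial time, then for the graph $G$ output by the reduction one could decide in polynomial time whether $\Psi(L(G))\le K$, and hence solve $3$-dimensional matching in polynomial time; since the latter is NP-hard, determining $\Psi(L(G))$ for bipartite $G$ must be NP-hard as well. I expect essentially all of the technical difficulty to reside inside Lemma \ref{edrsnpc} (deferred to the appendix), and specifically in verifying that the candidate set of part (b) \emph{doubly} resolves every pair of vertices of $L(G)$. This is the genuinely delicate point: the analogous set is only shown to be \emph{resolving} in the work of Epstein et~al., and upgrading from resolving to doubly resolving is a strictly stronger requirement, so the gadget values $N=2^{12}n$ and the distance differences they induce must be checked to separate every pair. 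Once the lemma is in hand, the theorem itself is a routine assembly of the polynomial reduction, the correctness equivalence, and the optimization-to-decision argument.
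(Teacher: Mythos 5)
Your proposal matches the paper's proof: the paper likewise derives Theorem~\ref{th:hard} directly from Lemma~\ref{edrsnpc}, combining its two parts into the equivalence between yes-instances of $3$-dimensional matching and $\Psi(L(G))\le K$, with the standard optimization-to-decision step left implicit. You also correctly locate the real technical content in part~(b) of the lemma -- showing the Epstein et~al.\ resolving set is in fact doubly resolving -- which is exactly what the paper identifies as its main contribution.
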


\section{General line graphs}\label{sec:linegraph}
Inspired by the inequalities $\lceil \log_2 \Delta(G)\rceil \le \mu(L(G)) \le |V(G)| - 2$ for the metric dimension of line graphs \cite{feng2013metric}, in this section, we establish similar and tight bounds for   $\Psi(L(G))$ in the minimum DRS problem. 
\begin{theorem}\label{main}
    If $G$ is a connected graph with at least three vertices, then
    \[\lceil \log_2 (1+\Delta(G))\rceil \le \Psi(L(G)) \le |V(G)| - 1.\]
\end{theorem}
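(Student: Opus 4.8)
The plan is to prove the two inequalities separately, treating the lower bound $\lceil \log_2(1+\Delta(G))\rceil \le \Psi(L(G))$ and the upper bound $\Psi(L(G)) \le |V(G)|-1$ by independent arguments.

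For the lower bound, I would start by fixing a vertex $w$ in $G$ of maximum degree $\Delta := \Delta(G)$, and considering the $\Delta$ edges incident to $w$, which form a clique of size $\Delta$ in $L(G)$ (together they are pairwise adjacent since they all share $w$). Let $R$ be any DRS of $L(G)$ with $|R| = \Psi(L(G))$. The key observation is that within this clique, any two edges $e, e'$ incident to $w$ are at distance $1$ in $L(G)$, so for a vertex $x \in R$ the quantity $d_{L(G)}(e,x) - d_{L(G)}(e',x)$ can only take values in $\{-1, 0, 1\}$ — in fact, for a fixed reference edge, the difference vector over $R$ must distinguish all $\Delta$ edges plus account for the doubly-resolving condition. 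I would make this precise by associating to each edge $e$ incident to $w$ the vector $(d_{L(G)}(e,x))_{x \in R}$ and arguing that the doubly-resolving property forces these vectors, taken modulo a common additive shift, to be pairwise distinct; since the coordinate differences lie in a bounded range, counting gives roughly $3^{|R|}$ or $2^{|R|}$ possibilities. The hard part here is pinning down the exact base of the exponential: to obtain $1 + \Delta \le 2^{|R|}$ rather than a weaker bound, I expect one must show that the relevant distance-difference signatures take only two effective values per coordinate (e.g.\ by observing that an edge incident to $w$ is at distance $d(f,x)$ or $d(f,x)\pm 1$ from any fixed edge $f$), and then encode the $\Delta$ edges together with one ``phantom'' element accounting for the $+1$ in $1+\Delta$ via a binary code of length $|R|$.

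For the upper bound, the strategy is to exhibit an explicit DRS of $L(G)$ of size at most $|V(G)|-1$. A natural candidate is to take a spanning tree $T$ of $G$, root it, and for each non-root vertex select one edge — giving $|V(G)|-1$ edges total, i.e.\ all edges of $T$. I would then verify that the vertex set of $L(G)$ corresponding to $E(T)$ is a DRS of $L(G)$: since $T$ is a spanning tree, every vertex of $G$ is an endpoint of some tree edge, and distances in $L(G)$ are controlled by distances in $G$, so for any two distinct edges $e, e'$ of $G$ one can locate a tree edge whose distance-difference separates them in the doubly-resolving sense. The main obstacle on this side is handling edges of $G$ that are \emph{not} in $T$ and confirming the doubly-resolving inequality (not merely the single-resolving one) for every pair; I would reduce this to a distance computation in $L(G)$ using the relation $d_{L(G)}(e,f) = d_G(e,f)$ appropriately interpreted via endpoints, and argue that choosing $R = E(T)$ suffices.

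Finally, I would double-check the small cases $|V(G)| \in \{3,4\}$ directly (as emphasized in the paper's contributions), since the spanning-tree argument must remain valid there and the logarithmic lower bound should be confirmed against the handful of connected graphs on three or four vertices. I expect the upper-bound direction to be the more routine of the two, with the delicate point being the precise constant in the lower bound — specifically justifying the ``$1+\Delta$'' rather than ``$\Delta$'' inside the logarithm, which is exactly where the doubly-resolving condition (as opposed to ordinary resolving) buys the extra unit.
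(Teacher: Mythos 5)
Your overall strategy matches the paper's on both sides, but each half of your proposal has a genuine gap.

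On the lower bound, you set up exactly the right objects (the clique of $\Delta$ edges incident to a maximum-degree vertex, the distance vectors to the DRS $R$, distinctness of these vectors modulo a common additive shift, and each coordinate confined to two consecutive values), but you explicitly leave the decisive counting step unresolved: you say the count is ``roughly $3^{|R|}$ or $2^{|R|}$'' and that you ``expect'' a binary-code argument with a ``phantom'' element to yield $1+\Delta \le 2^{|R|}$. That phantom-element device is not the right mechanism, and without the correct count the bound does not follow. The actual argument is short once stated: with $s=|R|$, each full distance vector lies in the box $\prod_{k=1}^{s}\{d_k, d_k+1\}$, which has $2^{s}$ elements; identifying vectors that differ by a common additive shift merges only the two extreme corners $(d_1,\dots,d_s)$ and $(d_1+1,\dots,d_s+1)$ (no other pair in the box differs by a constant vector), so at most $2^{s}-1$ edges can have pairwise distinct difference vectors. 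Hence $\Delta \le 2^{s}-1$, i.e.\ $s \ge \lceil \log_2(1+\Delta)\rceil$. Equivalently, the paper splits the difference vectors into two classes according to the first coordinate, each contained in a translated hypercube of size $2^{s-1}$, with the two hypercubes meeting in a single point.

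On the upper bound, your claim that $E(T)$ is a DRS of $L(G)$ for \emph{any} spanning tree $T$ is false, and this is not merely a small-case formality. Take $G=K_4$ on vertices $\{1,2,3,4\}$ and $T=P_4$ with edges $12,23,34$: the two non-tree edges $13$ and $24$ are each adjacent in $L(G)$ to all three tree edges, so both have distance vector $(1,1,1)$ to $E(T)$, and the pair $\{13,24\}$ is not even resolved, let alone doubly resolved. The same failure occurs for $K_4^-$. The paper's proof repairs this by stipulating that when $G=K_4$ or $K_4^-$ the spanning tree must be the star $K_{1,3}$ (never $P_4$), and its case analysis (comparing the sets $E_a(T)$ and $E_b(T)$ of tree edges meeting each of two edges $a,b$) uses this stipulation precisely to rule out the configuration above. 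Your plan to ``double-check $|V(G)|\in\{3,4\}$ directly'' might lead you to discover this, but as written your argument proves a false statement, and the fix (a constrained choice of spanning tree, threaded through the case where $E_a(T)=E_b(T)$) is a necessary idea, not a routine verification.
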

We prove the lower bound and $\lceil \log_2 (1+\Delta(G))\rceil$ the upper bound $|V(G)| - 1$ stated in Theorem~\ref{main}, and discuss their tightness in the following two subsections, respectively.
\subsection{Lower bound}
To verify the lower bound, we need the following basic property on DRSs derived by Kratica et~al.~\cite{kratica2009computing}.
\begin{lemma}
\label{small}
    Let $G'$ be a connected graph with $|V(G')|\ge2$ and $S=\{u_1,u_2,\dots,u_m\}$ be a DRS of $G'$. Then for each pair of distinct vertices $x,y\in V(G')$, there exists $u_j\in S$, such that $d_{G'}(x,u_j)-d_{G'}(x,u_1)\neq d_{G'}(y,u_j)-d_{G'}(y,u_1)$.
\end{lemma}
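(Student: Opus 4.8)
The plan is to translate both the hypothesis (that $S$ is a DRS) and the desired conclusion into statements about a single auxiliary \emph{difference function}, after which the lemma becomes a one-line pigeonhole argument. Fix an arbitrary pair of distinct vertices $x,y\in V(G')$ and define, for every vertex $u$,
\[
f(u):=d_{G'}(x,u)-d_{G'}(y,u).
\]
The crucial observation is that the doubly-resolving inequality for a pair $\{u_a,u_b\}$, namely $d_{G'}(x,u_a)-d_{G'}(x,u_b)\neq d_{G'}(y,u_a)-d_{G'}(y,u_b)$, rearranges exactly to $f(u_a)\neq f(u_b)$; symmetrically, the inequality we must produce, $d_{G'}(x,u_j)-d_{G'}(x,u_1)\neq d_{G'}(y,u_j)-d_{G'}(y,u_1)$, rearranges exactly to $f(u_j)\neq f(u_1)$. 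Thus the entire lemma is the assertion that if $f$ is non-constant on $S$, then some value of $f$ on $S$ differs from the specific value $f(u_1)$.

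First I would apply the definition of a DRS to the distinct vertices $x$ and $y$. This furnishes a pair $\{u_a,u_b\}\subseteq S$ doubly resolving $\{x,y\}$, which by the reformulation above means precisely $f(u_a)\neq f(u_b)$; in other words $f$ is not constant on $S$. Next I would finish by pigeonhole: since $f(u_a)$ and $f(u_b)$ are distinct, they cannot both equal $f(u_1)$, so at least one of $u_a,u_b$ — take it to be $u_j$ — satisfies $f(u_j)\neq f(u_1)$. Unwinding the definition of $f$ and rearranging then yields exactly the claimed inequality.

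I do not expect a genuine obstacle here; the lemma is essentially the remark that \emph{pinning one of the two landmarks to be the fixed vertex $u_1$ costs no doubly-resolving power}. The only points requiring minor care are the sign bookkeeping in the two rearrangements of $f$, and checking that the pigeonhole step remains valid even in the degenerate cases where $u_1$ coincides with $u_a$ or $u_b$ (in those cases the other element of the pair already serves as the desired $u_j$).
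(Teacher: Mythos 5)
Your proof is correct. Note that the paper does not actually prove this lemma: it is quoted from Kratica et al.\ \cite{kratica2009computing} as a known property of doubly resolving sets, so there is no internal proof to compare against. Your argument---reformulating both the DRS condition and the conclusion as statements about the single difference function $f(u)=d_{G'}(x,u)-d_{G'}(y,u)$, and then observing that two distinct values $f(u_a)\neq f(u_b)$ cannot both equal $f(u_1)$---is the standard and essentially canonical way to establish this fact, and your handling of the degenerate cases $u_1\in\{u_a,u_b\}$ is sound.
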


The above lemma  means that the vector $F(x|S):=(d_{G'}(x,u_2)-d_{G'}(x,u_1),\dots,$ $d_{G'}(x,u_m)-d_{G'}(x,u_1))$ uniquely determines $x$  if $S=\{u_1,u_2,\dots,u_m\}$ is a DRS of $G'$.

\begin{proof}[Proof of Theorem \ref{main} (Lower bound)]  Let $u$ be a vertex with degree $\Delta(G)$, and let $F_u = \{f_1, f_2, \dots, f_{\Delta(G)}\}$ be the set of edges incident to $u$. Let $S = \{e_1, \dots, e_s\}$ be a minimum DRS of $L(G)$, where $s=\Psi(L(G))$. For any edge $f_i \in F_u$, let $a_{ik} = d_{L(G)}(f_i, e_k)$. Since any two edges $f_i, f_j \in F_u$ are adjacent, for any fixed $k$, the distance $a_{ik}$ can only take two consecutive integer values.  Let $d_k = \min_{i} \{a_{ik}\}$. Then for any $i$ and $k$, $a_{ik} \in \{d_k, d_k + 1\}$.

    For each $f_i$, define its distance difference vector as $F(f_i|S) = (a_{i2} - a_{i1}, a_{i3} - a_{i1}, \dots, a_{is} - a_{i1})$. Let $\mathcal{F} = \{F(f_i|S) : i \in \{1, \dots, \Delta(G)\}\}$ be the set of all such vectors. By Lemma \ref{small}, all vectors in $\mathcal{F}$ must be distinct, so $|\mathcal{F}| = \Delta(G)$. We partition $\mathcal{F}$ into two disjoint sets based on the value of $a_{i1}$: $\mathcal{F}_0=\{F(f_i|S) : a_{i1}=d_1\}$ and $\mathcal{F}_1=\{F(f_i|S) : a_{i1}=d_1+1\}$. 
    For any vector in $\mathcal{F}_0$, its $(k-1)$-th component is $a_{ik} - d_1 = (d_k - d_1) + (a_{ik}-d_k)$, where $a_{ik}-d_k \in \{0,1\}$.
    For any vector in $\mathcal{F}_1$, its $(k-1)$-th component is $a_{ik} - (d_1+1) = (d_k - d_1) + (a_{ik}-d_k-1)$, where $a_{ik}-d_k-1 \in \{-1,0\}$. This shows that $\mathcal{F}_0$ is a subset of $\mathcal{F}'_0 = \{(d_2-d_1+x_2, \dots, d_s-d_1+x_s) : x_k \in \{0,1\}\}$, and $\mathcal{F}_1$ is a subset of $\mathcal{F}'_1 = \{(d_2-d_1+x_2, \dots, d_s-d_1+x_s) : x_k \in \{-1,0\}\}$.
    The sizes of these sets are $|\mathcal{F}'_0| = |\mathcal{F}'_1| = 2^{s-1}$. Their intersection $\mathcal{F}'_0 \cap \mathcal{F}'_1$ occurs only when $x_k=0$ for all $k$, which corresponds to the single vector $(d_2-d_1, \dots, d_s-d_1)$. Using the principle of inclusion-exclusion, we can bound the size of $\mathcal{F}$:
    \[|\mathcal{F}| = |\mathcal{F}_0 \cup \mathcal{F}_1| \le |\mathcal{F}'_0 \cup \mathcal{F}'_1| = |\mathcal{F}'_0| + |\mathcal{F}'_1| - |\mathcal{F}'_0 \cap \mathcal{F}'_1| = 2^{s-1} + 2^{s-1} - 1 = 2^s - 1.\]
    Since $|\mathcal{F}| = \Delta(G)$ and $s = \Psi(L(G))$, we have $\Delta(G) \leq 2^{\Psi(L(G))} - 1$. Rearranging this gives the desired bound: $\Psi(L(G)) \ge \lceil \log_2 (1 + \Delta(G)) \rceil$.\qed
\end{proof}

\paragraph{Tightness.} The lower bound $\lceil \log_2 (1 + \Delta(G)) \rceil$ is tight. First, it is not difficult to see that equality holds when $G$ is a path, an odd cycle, or a $K_{1,3}$. More generally, we construct a graph $A_k$ (see Fig. \ref{ak} for an illustration) as follows:
\[V(A_k)=\{u\}\cup \{v_1,\dots,v_k\}\cup \{w_0,\dots,w_{m-1}\}\cup \{w'_0,\dots,w'_{m-1}\},\]
\[E(A_k)=\{uv_1,\dots,uv_k\}\cup \{v_jw_i:\lfloor j/2^i \rfloor \bmod 2 = 0\}\cup \{w_0w'_0,\dots,w_{m-1}w'_{m-1}\},\]
where $m=\lceil \log_2 (1+k)\rceil$. Note that the edge between $v_j$ and $w_i$ exists if the $i$-th bit in the binary representation of $j$ is 0. The proof of the following ``tightness'' proposition is postponed to Appendix \ref{apx:prop:linelower}.
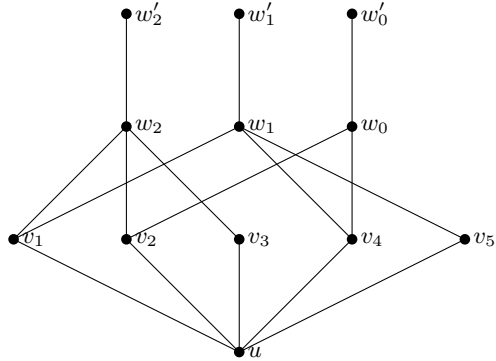
\begin{figure}[ht]
    \centering
    \begin{tikzpicture}
        \coordinate[label=right:{$u$}] (u) at (4.5,0);
        \fill (u) circle[radius=2pt];

        \foreach \i in {1,...,5}
        {
            \coordinate[label=right:{$v_\i$}] (v\i) at (1.5*\i,1.5);
            \fill (v\i) circle[radius=2pt];
            \draw (u) -- (v\i);
        }

        \foreach \i in {0,1,2}
        {
            \coordinate[label=right:{$w_\i$}] (w\i) at (-1.5*\i+6,3);
            \coordinate[label=right:{$w'_\i$}] (ww\i) at (-1.5*\i+6,4.5);
            \fill (w\i) circle[radius=2pt];
            \fill (ww\i) circle[radius=2pt];
            \draw (w\i) -- (ww\i);
        }
        \draw (w0) -- (v2);
        \draw (w0) -- (v4);
        \draw (w1) -- (v1);
        \draw (w1) -- (v4);
        \draw (w1) -- (v5);
        \draw (w2) -- (v1);
        \draw (w2) -- (v2);
        \draw (w2) -- (v3);
    \end{tikzpicture}
    \caption{The graph $A_5$}\label{ak}
\end{figure}

\begin{proposition}\label{linelower}
    For any $k\ge 2$, we have $\Psi(L(A_k))=m=\lceil \log_2 (1+\Delta(A_k))\rceil$. 
\end{proposition}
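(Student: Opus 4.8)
The plan is to prove the two bounds separately, the lower one being essentially free and the upper one carrying all the work. Since $u$ has degree $k$ while every other vertex has smaller degree, we have $\Delta(A_k)=k$: writing $\beta_i(j)=\lfloor j/2^i\rfloor\bmod 2$ for the $i$-th binary digit of $j$, the degree of $v_j$ is $1$ plus the number of zero digits among $\beta_0(j),\dots,\beta_{m-1}(j)$, which is at most $m$ because $1\le j<2^m$ forces at least one $1$-digit, and $\lceil\log_2(1+k)\rceil\le k$; similarly each $w_i$ has degree at most $k$ since some $j=2^i\le k$ has $\beta_i(j)=1$. Hence $\lceil\log_2(1+\Delta(A_k))\rceil=m$, and the lower bound $\Psi(L(A_k))\ge m$ follows immediately from the lower bound of Theorem~\ref{main}.

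For the matching upper bound I would exhibit the explicit set $R=\{\,w_iw_i':0\le i\le m-1\,\}$, viewed as $m$ vertices of $L(A_k)$, and prove it is a DRS. I will organise the edges of $A_k$ into three types: the star edges $e_j=uv_j$, the middle edges $h_{j,i}=v_jw_i$ (present exactly when $\beta_i(j)=0$), and the pendant edges $g_i=w_iw_i'$. The two tools are the standard identity $d_{L(H)}(e,f)=1+\min\{d_H(x,y):x\in e,\ y\in f\}$ for distinct edges of a connected graph $H$, and the reformulation supplied by Lemma~\ref{small}: $R$ is a DRS if and only if the reduced vectors $F(x\mid R)$ are pairwise distinct, equivalently the distance vectors $\vec D(x)=(D_0(x),\dots,D_{m-1}(x))$ with $D_i(x):=d_{L(A_k)}(x,g_i)$ are pairwise non-translates.

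The first concrete step is to record the few distances in $A_k$ forced by its layered structure: $d_{A_k}(u,w_i)=2$; $d_{A_k}(v_j,w_i)=1$ or $3$ according as $\beta_i(j)=0$ or $1$; and $d_{A_k}(w_i,w_{i'})\in\{2,4\}$, equal to $2$ precisely when some $v_j$ is adjacent to both $w_i$ and $w_{i'}$. Substituting these into the line-graph identity yields the coordinate formulas: $D_i(e_j)=2+\beta_i(j)$; for a middle edge $D_{i_0}(h_{j,i_0})=1$ while for $i\ne i_0$ one gets $D_i(h_{j,i_0})=2$ if $\beta_i(j)=0$ and $D_i(h_{j,i_0})\in\{3,4\}$ otherwise; and $D_i(g_{i'})=0$ for $i=i'$ and $D_i(g_{i'})=1+d_{A_k}(w_i,w_{i'})\in\{3,5\}$ for $i\ne i'$.

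Injectivity (up to translation) then follows by normalising each vector to have minimum entry $0$ and comparing. The three types are separated by the normalised pattern: a pendant edge has a unique $0$ with all other entries $\ge 3$; a middle edge has a unique $0$ (at position $i_0$) but always carries an entry in $\{2,3\}$; and a star edge has all entries in $\{0,1\}$. Within a type the identifications are clean: $(\beta_i(j))_i$ recovers $j$ for star edges, the $0$-coordinate pins $i_0$ and the $\{2\}$-versus-$\{3,4\}$ pattern recovers $j$ for middle edges, and the $0$-coordinate pins $i'$ for pendant edges. The step I expect to be the main obstacle is the set of cross-type comparisons, which rest on two facts that must be invoked carefully: that $j\ge 1$ always has a $1$-digit, so a middle edge necessarily has a coordinate with $D_i\ge 3$ and hence a normalised entry $\ge 2$ (this both separates middle from star edges and rules out the degenerate all-$\{0,1\}$ collision, which would force $j=0$); and that the quantity $d_{A_k}(w_{i_0},w_i)$ appears in both $D_i(h_{j,i_0})$ and $D_i(g_{i_0})$, so a putative middle--pendant collision would demand $D_i(h_{j,i_0})=2+d_{A_k}(w_{i_0},w_i)$, which is impossible under the $\{2,4\}$ dichotomy. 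Checking that these two obstructions together cover every cross-type pair is the most delicate, case-heavy part of the argument; the remaining comparisons reduce to the minimum-pattern bookkeeping above.
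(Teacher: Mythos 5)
Your proof is correct, and while it uses the same skeleton as the paper -- the identical lower-bound argument ($\Delta(A_k)=k$ plus Theorem~\ref{main}) and the identical candidate set $S=\{w_0w'_0,\dots,w_{m-1}w'_{m-1}\}$ with the same underlying distance formulas -- the verification that $S$ is a DRS runs along a genuinely different route. The paper fixes a pair of edges and exhibits, in six cases according to the types of the two edges, an explicit pair in $S$ that doubly resolves them; because the ``otherwise'' distances $4$ and $5$ can only occur when $m=2$, the paper must split off $k\in\{2,3\}$ and verify those two graphs by direct inspection. You instead use the fingerprint formulation: $S$ is a DRS if and only if the distance vectors $\vec D(\cdot)$ to $S$ are pairwise non-translates (the direction you need is immediate from the definition -- if the reduced vectors differ in coordinate $j$, then $\{w_0w'_0, w_jw'_j\}$ doubly resolves the pair), and then you show injectivity of the min-normalized vectors by separating the three edge types and resolving the cross-type collisions with the two obstructions you identify. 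This buys uniformity: the $\{3,4\}$ and $\{3,5\}$ ambiguities are absorbed into the dichotomies rather than excluded, so $m=2$ needs no special treatment and no figures. One small point to patch when writing it up: when $k=2^m-1$, the index $j=2^m-1$ has all binary digits equal to $1$, so the star edge $uv_j$ has distance vector $(3,\dots,3)$ and normalized vector $\vec 0$, not $(\beta_i(j))_i$; your within-type claim ``$(\beta_i(j))_i$ recovers $j$'' should be restated to cover this edge, though injectivity survives since every other star edge's normalized vector contains a $1$, and $\vec 0$ collides with no middle or pendant vector.
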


 \subsection{Upper bound}   
 Before proceeding with the proof for the upper bound $|V(G)|-1$ of $\Psi(L(G))$, we introduce some notations and establish a claim. For any edge $e\in E(G)$, let $E_e(T)=\{e'\in E(T):e\cap e'\neq \emptyset\}$. For any vertex $v\in V(G)$, let $E_v(T)=\{vu:vu\in E(T)\}$. 
    \begin{claim}
    For any edge $xy\in E(G)$, we have $|E_{xy}(T)|\ge 2$.
    \end{claim}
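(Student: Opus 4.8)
The plan is to exploit that $T$ is a spanning tree of $G$: it is connected, spans $V(G)$, and since $|V(G)|\ge 3$ it has $|V(G)|-1\ge 2$ edges. In particular both endpoints $x$ and $y$ are vertices of the connected tree $T$ on at least three vertices, so each has degree at least $1$ in $T$, i.e. each is incident to at least one tree edge. Recall that $E_{xy}(T)$ is exactly the set of tree edges incident to $x$ or to $y$, so it suffices to exhibit two distinct such edges.

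I would then split into two cases according to whether $xy\in E(T)$. If $xy\notin E(T)$, I would pick any tree edge $e_x\in E_x(T)$ and any tree edge $e_y\in E_y(T)$, both of which exist by the degree observation above. These two edges are distinct: a common edge would have to be incident to both $x$ and $y$, hence equal to $xy$, contradicting $xy\notin E(T)$. Thus $\{e_x,e_y\}\subseteq E_{xy}(T)$ gives $|E_{xy}(T)|\ge 2$.

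If instead $xy\in E(T)$, then $xy$ itself already lies in $E_{xy}(T)$, and I would produce a second tree edge incident to $x$ or to $y$. Deleting $xy$ from $T$ splits it into two components, one containing $x$ and one containing $y$; since $T$ has at least three vertices, at least one component, say the one containing $x$, contains a further vertex, so $x$ has a tree neighbor other than $y$. The corresponding tree edge is distinct from $xy$ and lies in $E_{xy}(T)$, again yielding $|E_{xy}(T)|\ge 2$.

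There is essentially no hard obstacle here; the statement is elementary. The only points requiring care are ensuring the two chosen edges are genuinely distinct (handled by the observation that the only edge incident to both $x$ and $y$ is $xy$ itself) and ruling out the degenerate situation where $T$ reduces to a single edge, which cannot occur because $|V(G)|\ge 3$ forces $T$ to carry at least two edges.
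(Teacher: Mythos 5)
Your proof is correct and is essentially the paper's argument with the case analysis reorganized: you split on whether $xy\in E(T)$, while the paper splits on whether $x$ or $y$ is a leaf of $T$, but both proofs come down to the same two observations, namely that the only edge incident to both $x$ and $y$ is $xy$ itself, and that $|V(G)|\ge 3$ rules out the degenerate case where $T$ is a single edge. There is no gap.
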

     \begin{proof}
    If $x$ is not a leaf of $T$, then $|E_x(T)|\ge 2$. Since $E_x(T)\subseteq E_{xy}(T)$, the claim holds. Similarly, the claim holds if $y$ is not a leaf of $T$. If both $x$ and $y$ are leaves of $T$, then the edge $xy$ cannot be in $T$ because $|V(G)|\ge 3$. In this case, $|E_x(T)|=|E_y(T)|=1$ and the edges are distinct, so $|E_x(T)\cup E_y(T)|=2$. Since $E_x(T)\cup E_y(T)\subseteq E_{xy}(T)$, the claim is established. \qed
    \end{proof}

    Let $T$ be a spanning tree of $G$. In particular, if $G=K_4$ or $K_4^-$ ($K_4$ with one edge removed), we add the requirement that $T=K_{1,3}$, meaning $T \neq P_4$ (see Fig. \ref{span}). We will now show that $E(T)$ is a DRS for $L(G)$, which implies $\Psi(L(G)) \le |E(T)|=|V(G)|-1$.

    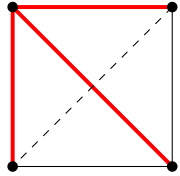
\begin{figure}[ht]
        \centering
        \begin{tikzpicture}
            \coordinate (a0) at (90*0+45:1.5);
            \coordinate (a1) at (90*1+45:1.5);
            \coordinate (a2) at (90*2+45:1.5);
            \coordinate (a3) at (90*3+45:1.5);
            \draw[red,line width=1.5pt] (a0) -- (a1);
            \draw (a2) -- (a3);
            \draw[red,line width=1.5pt] (a1) -- (a2);
            \draw[red,line width=1.5pt] (a1) -- (a3);
            \draw (a0) -- (a3);
            \draw[dashed] (a0) -- (a2);
            \fill (a0) circle[radius=2pt];
            \fill (a2) circle[radius=2pt];
            \fill (a1) circle[radius=2pt];
            \fill (a3) circle[radius=2pt];
        \end{tikzpicture}
        \caption{The case where $G\supseteq K_4^-$. The bold edges represent the chosen spanning tree $T$, and the dashed line represents a possible edge in $G$.}\label{span}
    \end{figure}
 
\begin{proof}[Proof of Theorem \ref{main} (Upper bound)]    Let $a,b$ be two distinct edges in $E(G)$. We must show that $a$ and $b$ are doubly resolved by a pair of edges from $E(T)$. Without loss of generality, we assume that $|E_a(T)|\le |E_b(T)|$.
    If $E_b(T)\setminus E_a(T)\neq \emptyset$, then let $e_1\in E_a(T)\setminus \{b\}$ (which exists by the preceding claim) and $e_2\in E_b(T)\setminus E_a(T)$. We have
    \[d_{L(G)}(a,e_1)-d_{L(G)}(a,e_2)\le 1-2=-1<0=1-1\le d_{L(G)}(b,e_1)-d_{L(G)}(b,e_2),\]
    which implies $\{a,b\}$ is doubly resolved by $\{e_1,e_2\}\subseteq E(T)$.
    
    Otherwise, we consider the case where $E_a(T)=E_b(T)$. Let $a=x_1y_1$ and $b=x_2y_2$. If $\{x_1,y_1\}\cap \{x_2,y_2\}= \emptyset$, then it must be that $|V(G)|\ge 4$. We first assume $|V(G)|\ge 5$. Since $T$ is a spanning tree, there exists an edge $x_3y_3\in E(T)$ such that $x_3\in \{x_1,y_1,x_2,y_2\}$ and $y_3\in V(G)\setminus \{x_1,y_1,x_2,y_2\}$. Without loss of generality, assume $x_3=x_1$. Then $x_3y_3\in E_a(T)\setminus E_b(T)$, a contradiction. Now, assume $|V(G)|=4$. 
    Since $b\notin E_a(T)$ and $a\notin E_b(T)$ and $E_a(T)=E_b(T)$, we have $\{a,b\}\cap E(T)=\emptyset$. 
    Thus $T\subseteq K_{2,2}$, which implies $T=P_4$ and $G\supseteq K_4^-$. This contradicts our choice of $T$.
    
    Now, we assume that $x_1=x_2$. If there exists $x_3\notin \{x_2,y_2\}$ such that $y_1x_3\in E(T)$, then $y_1x_3\in E_a(T)\setminus E_b(T)$, a contradiction. Therefore, we must have
    $E_{y_1}(T)\subseteq \{y_1y_2,x_1y_1\}$. 
    Similarly, we must have 
    $E_{y_2}(T)\subseteq \{y_1y_2,x_2y_2\}$. 
    If $x_1y_1\in E(T)$ and $x_2y_2\in E(T)$, i.e., $\{a,b\}\subseteq E(T)$, then $\{a,b\}$ can be doubly resolved by $\{a,b\}\subseteq E(T)$. If $x_1y_1\notin E(T)$ and $x_2y_2\notin E(T)$, then $y_1y_2\in E(T)$. 
    It implies that $|E_{y_1y_2}(T)|=1$, a contradiction.
    Finally, without loss of generality, we assume that $x_1y_1\in E(T)$ and $y_1y_2\in E(T)$. Let $c=y_1y_2$, then
    \[d_{L(G)}(a,a)-d_{L(G)}(a,c)=-1<0=d_{L(G)}(b,a)-d_{L(G)}(b,c),\]
    which means $\{a,b\}$ is doubly resolved by $\{a,c\}\subseteq E(T)$.\qed
    \end{proof}

\paragraph{Tightness.} The upper bound $|V(G)|-1$ in Theorem \ref{main} is also tight. To see this, consider the graph $T_k$ constructed by gluing $k$ copies of $K_3$  at a single vertex (see Fig. \ref{tk} for an illustration). The proof of the following ``tightness'' proposition is given in Appendix \ref{apx:prop:lineupper}.

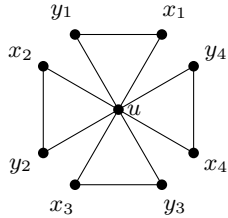
\begin{figure}[ht]
    \centering
    \begin{tikzpicture}
        \coordinate[label=right:{$u$}] (d) at (0,0);
        \fill (d) circle[radius=2pt];
        \foreach \i in {1,...,4}
        {
            \coordinate (b\i) at (90*\i-30:0.577*2);
            \node at (90*\i-30:1.5) {$x_\i$};
            \fill (b\i) circle[radius=2pt];
            \coordinate (c\i) at (90*\i+30:0.577*2);
            \node at (90*\i+30:1.5) {$y_\i$};
            \fill (c\i) circle[radius=2pt];
            \draw (d) -- (b\i) -- (c\i) --  cycle;
        }
    \end{tikzpicture}
    \caption{The graph $T_4$}\label{tk}
\end{figure}

\begin{proposition}\label{lineupper}
    For each $k\ge 1$, $\Psi(L(T_k))=2k=|V(T_k)|-1$.
\end{proposition}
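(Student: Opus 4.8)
The upper bound requires no work: since $T_k$ has $2k+1\ge 3$ vertices and is neither $K_4$ nor $K_4^-$, Theorem~\ref{main} already yields $\Psi(L(T_k))\le |V(T_k)|-1 = 2k$. The entire content of the proposition is therefore the matching lower bound $\Psi(L(T_k))\ge 2k$, and the plan is to exploit the rigid structure of $L(T_k)$. Denote the three edges of the $i$-th triangle by $a_i=ux_i$, $b_i=uy_i$, and $c_i=x_iy_i$; these $3k$ edges are exactly the vertices of $L(T_k)$. The distances are easy to record: the $2k$ edges incident to $u$ (all the $a_i,b_i$) form a clique, so they are pairwise at distance $1$; $c_i$ is adjacent only to $a_i$ and $b_i$; and for $j\neq i$ one has $d_{L(T_k)}(c_i,a_j)=d_{L(T_k)}(c_i,b_j)=2$ and $d_{L(T_k)}(c_i,c_j)=3$.

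Let $S$ be an arbitrary DRS of $L(T_k)$. The vertex set partitions into the $k$ triples $V_i=\{a_i,b_i,c_i\}$, so $|S|=\sum_{i=1}^{k}|S\cap V_i|$, and it suffices to prove that $|S\cap V_i|\ge 2$ for every $i$. I will establish this by ruling out $|S\cap V_i|\le 1$. The convenient tool is the observation that a pair $\{z,z'\}$ is doubly resolved by $S$ if and only if the difference function $w\mapsto d_{L(T_k)}(z,w)-d_{L(T_k)}(z',w)$ is non-constant on $S$; this follows directly from the definition of double resolution (equivalently, from Lemma~\ref{small}). I introduce two such functions per triangle, namely $p_i(w)=d_{L(T_k)}(a_i,w)-d_{L(T_k)}(b_i,w)$ and $h_i(w)=d_{L(T_k)}(a_i,w)-d_{L(T_k)}(c_i,w)$, together with the $a_i\leftrightarrow b_i$ mirror of $h_i$.

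A short computation using the distances above shows that $p_i(w)=0$ for every $w$ except $p_i(a_i)=-1$ and $p_i(b_i)=1$. Hence the pair $\{a_i,b_i\}$ is doubly resolved only if $S\cap\{a_i,b_i\}\neq\emptyset$, which immediately eliminates the possibilities $S\cap V_i=\emptyset$ and $S\cap V_i=\{c_i\}$. Similarly one computes that $h_i(w)=-1$ for every vertex $w$ of $L(T_k)$ except $h_i(b_i)=0$ and $h_i(c_i)=1$; consequently the pair $\{a_i,c_i\}$ is doubly resolved only if $S\cap\{b_i,c_i\}\neq\emptyset$. This rules out $S\cap V_i=\{a_i\}$, and the mirror function rules out $S\cap V_i=\{b_i\}$. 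All four possibilities with $|S\cap V_i|\le 1$ are thus excluded, so $|S\cap V_i|\ge 2$ and therefore $|S|\ge 2k$, completing the proof. I expect the main obstacle to be the singleton case $S\cap V_i=\{a_i\}$: the tempting pair to test is $\{b_i,c_i\}$, but its difference function is genuinely non-constant on $S$ whenever $S$ meets another triangle (since $d_{L(T_k)}(a_i,\cdot)$ separates $a_i$ from the rest of the graph), so that pair is actually doubly resolved and gives no contradiction. The correct choice is instead $\{a_i,c_i\}$, whose difference function $h_i$ collapses to the constant value $-1$ off the $i$-th triangle precisely because $b_i\notin S$; recognizing that this is the unique obstructing pair is the crux of the argument.
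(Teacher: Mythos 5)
Your proof is correct and follows essentially the same route as the paper's: both reduce to showing $|S\cap\{ux_i,uy_i,x_iy_i\}|\ge 2$ for every triangle, and both rule out the small intersections using exactly the same two obstructing pairs — $\{ux_i,uy_i\}$ (your $p_i$) when $S$ misses both edges at $u$, and $\{ux_i,x_iy_i\}$ (your $h_i$) in the singleton case — with the upper bound taken from Theorem~\ref{main}. Your reformulation via difference functions being constant on $S$ is just a repackaging of the paper's case-by-case contradiction.
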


\section{Line graphs of trees}\label{sec:tree}
In this section, we use $T$ to denote a tree. As we have seen, determining $\Psi(L(T))$ requires a different approach than for $\mu(L(T))$. The existing concept of an exterior major vertex is not sufficient to capture the properties of DRSs in this context. We therefore introduce a refined notion of strong exterior major vertex.

A major vertex of $T$ is a \emph{strong exterior major vertex}  if it has a leaf neighbor (see Fig. \ref{tree2}). We denote the set of all the strong exterior major vertices in $T$ by $\mathrm{EX}'(T)$ and set $ex'(T) = |\mathrm{EX}'(T)|$. The goal of this section is to prove  $\Psi(L(T))=\sigma(T)-ex'(T)$ for any tree $T$ (Theorem \ref{treethm}).

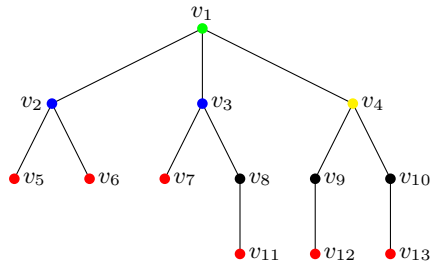
\begin{figure}[ht]
    \centering
    \begin{tikzpicture}
        \coordinate[label=above:$v_1$] (s1) at (0,0);
        \coordinate[label=left:$v_2$] (s2) at (-2,-1);
        \coordinate[label=right:$v_3$] (s3) at (0,-1);
        \coordinate[label=right:$v_4$] (s4) at (2,-1);
        \coordinate[label=right:$v_5$] (s5) at (-2.5,-2);
        \coordinate[label=right:$v_6$] (s6) at (-1.5,-2);
        \coordinate[label=right:$v_7$] (s7) at (-0.5,-2);
        \coordinate[label=right:$v_8$] (s8) at (0.5,-2);
        \coordinate[label=right:$v_9$] (s9) at (1.5,-2);
        \coordinate[label=right:$v_{10}$] (s10) at (2.5,-2);
        \coordinate[label=right:$v_{11}$] (s11) at (0.5,-3);
        \coordinate[label=right:$v_{12}$] (s12) at (1.5,-3);
        \coordinate[label=right:$v_{13}$] (s13) at (2.5,-3);

        \foreach \i[evaluate={\j=int(2*\i+1)}] in {2,3,4}
        {
            \draw (s1) -- (s\i);
            \draw (s\j) -- (s\i);
        }
        \foreach \i[evaluate={\j=int(2*\i+2)}] in {2,3,4}
        {
            \draw (s\j) -- (s\i);
        }
        \foreach \i[evaluate={\j=int(\i+3)}] in {8,9,10}
        {
            \draw (s\j) -- (s\i);
        }
        \foreach \i in {5,6,7,11,12,13}
        {
            \fill[red] (s\i) circle[radius=2pt];
        }
        \foreach \i in {2,3}
        {
            \fill[blue] (s\i) circle[radius=2pt];
        }
        \fill[yellow] (s4) circle[radius=2pt];
        \foreach \i in {8,9,10}
        {
            \fill (s\i) circle[radius=2pt];
        }
        \fill[green] (s1) circle[radius=2pt];
    \end{tikzpicture}
    \caption{An example of special vertices in a tree. The set of major vertices is $\{v_1, v_2, v_3, v_4\}$, the set of external major vertices is $\{v_2, v_3, v_4\}$, and the set of strong external major vertices is $\{v_2, v_3\}$. }\label{tree2}
\end{figure}

 Our proof utilizes a decomposition property of the minimum DRS problem on non-2-connected graphs, established by  \cite{lu2022algorithmic}, which allows us to decompose the problem on the entire graph $L(T)$ into subproblems on its blocks.


A vertex $v$ in a connected graph $G$ is a \emph{cut vertex} if its removal increases the number of connected components of $G$. A connected graph is not 2-connected if and only if it contains at least one cut vertex. A \emph{block} of $G$ is a maximal 2-connected subgraph of $G$. We call a DRS set $S$ a $D$-DRS for every $D\subseteq S$. Given $D\subseteq V(G)$, we define $\Psi_D(G)=\min\{|S|-|D|:S$ is a $D$-DRS of $G\}$. By definition, it is clear that $\Psi(G)-|D| \le \Psi_D(G)\le \Psi(G)$. Note that $\Psi_\emptyset(G)=\Psi(G)$. Lu et~al.~\cite{lu2022algorithmic} established the following decomposition lemma.

\begin{lemma}
\label{lu}
    Let $G=(V,E)$ be a connected graph that is not $2$-connected. Let $G$ be decomposed into its blocks $G_i=(V_i,E_i)$, $i=1,2,\ldots,p$, where $V=\bigcup_{i=1}^{p} V_i$, $E=\bigcup_{i=1}^{p} E_i$, and $E_i\cap E_j=\emptyset$ for $i\neq j$. Let $R$ be the set of cut vertices of $G$ and let $R_i=R\cap V_i$ for $i=1,\dots,p$. Then, a set $S$ is a DRS of $G$ if and only if for every $i\in\{1,\dots,p\}$, the set $S_i=(S\cap V_i)\cup R_i$ is a DRS of $G_i$.
    
    In particular, if each $S_i$ is a minimum $R_i$-doubly resolving set of $G_i$, then $S=\bigcup_{i=1}^{p} (S_i\setminus R_i)$ is a minimum DRS of $G$. Consequently,
    $\Psi(G)=\sum_{i=1}^{p} \Psi_{R_i}(G_i)$.
\end{lemma}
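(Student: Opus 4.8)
The plan is to build everything on a single structural fact about how distances in a non-$2$-connected graph decompose through cut vertices. For a vertex $w\in V(G)$ and a block $G_k$, define the \emph{projection} $g_k(w)$ to be $w$ itself if $w\in V_k$, and otherwise the unique cut vertex $c\in R_k$ through which every path from $w$ to $V_k$ passes (its existence and uniqueness follow from the block--cut-tree structure). I would first record two facts: (i) for $u,u'$ in a common block $G_k$, $d_G(u,u')=d_{G_k}(u,u')$, since a shortest path can be taken inside the block; and (ii) for every $z\in V_k$, $d_G(w,z)=d_G(w,g_k(w))+d_{G_k}(g_k(w),z)$, because every $w$--$z$ path must pass through $g_k(w)$. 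The payoff is the following reduction, obtained by subtracting (ii) for $z=x$ and $z=y$ so that the additive constant $d_G(w,g_k(w))$ cancels: for any $x,y\in V_k$,
\[
\{x,y\}\text{ doubly resolves }\{u,v\}\text{ in }G
\iff
\{x,y\}\text{ doubly resolves }\{g_k(u),g_k(v)\}\text{ in }G_k.
\]
This identity is the engine of both directions.

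For the forward implication, suppose $S$ is a DRS of $G$ and fix distinct $u,v\in V_i$. A pair $\{x,y\}\subseteq S$ doubly resolves them in $G$; I would map each element through the projection $g_i$, noting $\{g_i(x),g_i(y)\}\subseteq (S\cap V_i)\cup R_i=S_i$. By (ii), replacing $x$ by $g_i(x)$ only shifts the relevant distance by a constant, so $\{g_i(x),g_i(y)\}$ doubly resolves $\{u,v\}$ in $G_i$ (here $g_i(u)=u$ and $g_i(v)=v$). The only point to check is $g_i(x)\neq g_i(y)$: if they coincided, then by (ii) $d_G(u,x)-d_G(u,y)$ would be independent of $u$, so $\{x,y\}$ could not doubly resolve the distinct pair $\{u,v\}$, a contradiction. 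Hence $S_i$ is a DRS of $G_i$.

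The backward implication is where the real work lies, and I expect it to be the main obstacle. Assume every $S_i$ is a DRS of $G_i$ and fix distinct $u,v$. Two sub-tasks arise. First, I must produce a block $G_k$ with $g_k(u)\neq g_k(v)$; I would take a shortest $u$--$v$ path, let $uu'$ be its first edge and $G_k$ the unique block containing it. Then $g_k(u)=u$, while $g_k(v)=u$ is impossible, since it would force $u$ to separate $u'$ from $v$, contradicting that the simple path continues from $u'$ to $v$ without revisiting $u$. With such a $G_k$ fixed, the reduction yields a pair $\{x,y\}\subseteq S_k=(S\cap V_k)\cup R_k$ doubly resolving $\{g_k(u),g_k(v)\}$ in $G_k$, hence $\{u,v\}$ in $G$. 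The second sub-task is that this pair may use cut vertices of $R_k\setminus S$; I would replace any such $c$ by a genuine element of $S$ lying strictly beyond $c$ (away from $G_k$). Existence of such a substitute is the crux: the branch of the block--cut tree beyond $c$ contains a leaf block $G_\ell$ whose only cut vertex is $c_\ell$, and since $S_\ell=(S\cap V_\ell)\cup\{c_\ell\}$ must be a DRS (so must contain at least two elements), $S$ necessarily contains a vertex of $V_\ell\setminus\{c_\ell\}$, which lies strictly beyond $c$. By (ii) each substitution shifts distances by a constant that cancels in the double-difference, and substitutes for two distinct cut vertices of $R_k$ lie in disjoint branches, so the replaced pair remains a genuine two-element subset of $S$ doubly resolving $\{u,v\}$.

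Finally, the ``in particular'' statement and the formula follow by bookkeeping. Writing $S=\bigcup_i(S_i\setminus R_i)$ for minimum $R_i$-DRSs $S_i$, I would observe that $S_i\setminus R_i$ consists of non-cut vertices, which belong to a unique block, so these sets are pairwise disjoint and $S\cap V_i=S_i\setminus R_i$, whence $(S\cap V_i)\cup R_i=S_i$; by the equivalence $S$ is a DRS with $|S|=\sum_i(|S_i|-|R_i|)=\sum_i\Psi_{R_i}(G_i)$, giving $\Psi(G)\le\sum_i\Psi_{R_i}(G_i)$. For the reverse inequality I would take any minimum DRS $S^\star$ of $G$; by the forward direction each $(S^\star\cap V_i)\cup R_i$ is an $R_i$-DRS of $G_i$, so $\Psi_{R_i}(G_i)\le |(S^\star\cap V_i)\setminus R_i|$, and summing (again using that non-cut vertices lie in unique blocks) yields $\sum_i\Psi_{R_i}(G_i)\le |S^\star\setminus R|\le|S^\star|=\Psi(G)$. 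The two inequalities force $\Psi(G)=\sum_i\Psi_{R_i}(G_i)$, and in turn show the constructed $S$ is minimum.
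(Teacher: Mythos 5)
A preliminary remark: the paper does not prove this lemma at all --- it is imported as a known result of Lu, Ye and Zhu \cite{lu2022algorithmic} --- so your argument can only be judged on its own terms, not against an in-paper proof. Most of it holds up: facts (i)--(ii) and the projection identity are correct; the forward direction (including the check $g_i(x)\neq g_i(y)$) is complete; choosing the block containing the first edge of a shortest $u$--$v$ path does guarantee $g_k(u)\neq g_k(v)$; the leaf-block argument correctly produces a substitute in $S$ beyond each cut vertex of $R_k\setminus S$; and the final bookkeeping for $\Psi(G)=\sum_{i=1}^p\Psi_{R_i}(G_i)$ is sound.

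The gap sits exactly at the step you yourself call the crux. Your justification of the substitution --- ``by (ii) each substitution shifts distances by a constant that cancels'' --- does not cover all four relevant distances. Fact (ii) yields $d_G(w,s)=d_G(w,c)+d_G(c,s)$ only when $c$ separates $w$ from $s$, i.e.\ when $w$ lies outside the component of $G-c$ containing $s$. For $u$ this always holds ($u\in V_k$, and that component meets no vertex of $V_k\setminus\{c\}$), but not for $v$: only $g_k(u)\neq g_k(v)$ is guaranteed, so $g_k(v)=c$ is entirely possible, and the branch of the block--cut tree beyond $c$ may be unique, forcing your substitute $s$ into the \emph{same} component of $G-c$ as $v$; then $d_G(v,s)$ need not equal $d_G(v,c)+d_G(c,s)$, and the shift for $v$ differs from the shift for $u$. (Your parenthetical that substitutes for distinct cut vertices lie in disjoint branches addresses only the distinctness of the two substitutes, not this.) The conclusion of the step is nevertheless true, and the repair is short: say $x=c$ with $g_k(v)=x$, and put $\alpha=d_G(x,s)$, $\beta=d_{G_k}(x,y)$. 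Since every $v$--$y$ path passes through $x$, we have $d_G(v,y)=d_G(v,x)+\beta$; if the substituted pair failed, i.e.\ $d_G(u,s)-d_G(u,y)=d_G(v,s)-d_G(v,y)$, then $d_G(u,s)=d_G(u,x)+\alpha$ and the triangle inequality $d_G(v,s)\le d_G(v,x)+\alpha$ give $d_G(u,x)-d_G(u,y)\le-\beta$, while the triangle inequality in $G_k$ gives $d_G(u,x)-d_G(u,y)\ge-\beta$; hence $d_G(u,x)-d_G(u,y)=-\beta=d_G(v,x)-d_G(v,y)$, contradicting that $\{x,y\}$ doubly resolves $\{u,v\}$. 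The same computation disposes of the case where both $x$ and $y$ are substituted. With this patch your proof is complete.
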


The application of Lemma \ref{lu} to $L(T)$ requires characterizing the cut vertices and blocks of $L(T)$. We accomplish the tasks in the following two lemmas. 

\begin{lemma}\label{treea}  
Let $T$ be a tree with $|V(T)| \geq 3$. An edge $uv\in E(T)$ is a cut vertex of $L(T)$ if and only if neither $u$ nor $v$ is a leaf of $T$.
\end{lemma}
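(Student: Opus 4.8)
The plan is to prove both directions of the equivalence by relating cut vertices of the line graph $L(T)$ to the structure of $T$ around the edge $uv$. The key observation is that an edge $e=uv$ of $T$ corresponds to a vertex of $L(T)$, and its neighbors in $L(T)$ are exactly the edges of $T$ incident to $u$ or to $v$, i.e.\ the edges in $E_u(T)\cup E_v(T)$. Thus understanding whether $uv$ is a cut vertex amounts to understanding how the edges incident to $u$ and the edges incident to $v$ are interconnected in $L(T)$ once $uv$ itself is removed.

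First I would prove the ``if'' direction: suppose neither $u$ nor $v$ is a leaf, so each has another incident edge. Pick $f=uu'\neq uv$ and $g=vv'\neq uv$. In $L(T)$, the vertices $f$ and $g$ are nonadjacent (since $e=uv$ is the only edge joining the $u$-side to the $v$-side, as $T$ is a tree and has no other $u$--$v$ connection), so any $f$--$g$ path in $L(T)$ must pass through a common-endpoint edge. I would argue that after deleting the vertex $uv$ from $L(T)$, every path from an edge incident only to $u$ to an edge incident only to $v$ is destroyed, because in $T$ the unique path between the $u$-component and the $v$-component runs through the edge $uv$. Concretely, removing $uv$ from $T$ splits $T$ into two components $T_u\ni u$ and $T_v\ni v$; the edges of $T_u$ and the edges of $T_v$ form two vertex sets in $L(T)-uv$ with no edge between them, since two edges are adjacent in $L(T)$ only if they share an endpoint, and no edge of $T_u$ shares an endpoint with an edge of $T_v$. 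Hence $L(T)-uv$ is disconnected and $uv$ is a cut vertex.

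For the ``only if'' direction I would prove the contrapositive: if $u$ (say) is a leaf of $T$, then $uv$ is not a cut vertex. Since $u$ is a leaf, the only edge incident to $u$ is $uv$ itself, so $E_u(T)=\{uv\}$ and every neighbor of $uv$ in $L(T)$ is an edge incident to $v$. These neighbors are therefore pairwise adjacent in $L(T)$ (they all share the endpoint $v$), forming a clique. Deleting the vertex $uv$ leaves all of its former neighbors mutually adjacent and still connected to the rest of $L(T)$ through $v$, so no component is separated; thus $L(T)-uv$ remains connected. A clean way to phrase this is that a vertex whose neighborhood is a clique (a simplicial vertex) can never be a cut vertex, and $uv$ is simplicial in $L(T)$ precisely when one of $u,v$ is a leaf.

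The main obstacle, and the point needing the most care, is the ``if'' direction's connectivity claim: I must verify not only that edges of $T_u$ and $T_v$ are non-adjacent across the partition, but that each side is itself nonempty and that $L(T)-uv$ genuinely splits rather than merely losing some edges. Nonemptiness follows from $u,v$ both being non-leaves, and the tree structure (no cycles, unique paths) is exactly what guarantees there is no alternative edge bridging the two sides, so this is where the hypothesis that $T$ is a tree is essential. I would make sure to invoke $|V(T)|\ge 3$ to rule out the degenerate single-edge tree, and to confirm that $u$ or $v$ being non-leaves forces at least one additional edge on each side so that both parts of the disconnection are actually present.
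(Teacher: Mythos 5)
Your proposal is correct, and its forward direction (both endpoints non-leaves implies cut vertex) is essentially identical to the paper's argument: removing $uv$ from $T$ yields components $T_u$ and $T_v$, each containing an edge because $\deg(u),\deg(v)\ge 2$, and no edge of $T_u$ shares an endpoint with an edge of $T_v$, so in $L(T)-uv$ the vertex sets $E(T_u)$ and $E(T_v)$ are nonempty and have no edges between them. The difference is in the converse. The paper deletes the leaf (say $v$) from $T$, identifies $L(T)-\{uv\}$ with the line graph of the tree $T'=T-\{v\}$, and invokes the fact that the line graph of a connected graph with at least one edge is connected. You instead note that when $u$ is a leaf, every neighbor of $uv$ in $L(T)$ is an edge incident to $v$, so $N_{L(T)}(uv)$ is a clique, i.e., $uv$ is simplicial, and a simplicial vertex can never be a cut vertex (any path through it can be rerouted along the clique edge joining its entry and exit neighbors). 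Your route is more elementary and self-contained: it needs neither the identification $L(T)-\{uv\}=L(T-v)$ nor any connectivity property of line graphs, and it gives the slightly sharper structural observation that $uv$ is simplicial in $L(T)$ exactly when one of $u,v$ is a leaf. The paper's route, on the other hand, stays entirely within the line-graph formalism and reuses standard line-graph machinery. One stylistic caveat: your informal phrase that the neighbors remain ``connected to the rest of $L(T)$ through $v$'' is loose, since $v$ is not a vertex of $L(T)$; the simplicial-vertex formulation you state afterwards is the rigorous version and is the one to keep.
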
  

\begin{proof}  
If one of $u$ and $v$ is a leaf, then without loss of generality, assume $v$ is a leaf. Removing $v$ from $T$ results in the graph $T' = T - \{v\}$, which is also a tree and therefore connected. The vertex set of the graph $L(T) - \{uv\}$ is precisely the edge set of $T'$, i.e., $V(L(T) - \{uv\}) = E(T')$. Since $T'$ is a connected graph with at least one edge (as $|V(T)| \ge 3$), its line graph, $L(T')$, is connected. Thus, $L(T) - \{uv\}$ is connected, which means $uv$ is not a cut vertex of $L(T)$.

Conversely, assume neither $u$ nor $v$ is a leaf. Removing the edge $uv$ from the tree $T$ disconnects it into two non-trivial components (subtrees), let's call them $T_u$ (containing $u$) and $T_v$ (containing $v$). Since neither $u$ nor $v$ is a leaf, they both have a degree of at least 2 in $T$. This ensures that both components $T_u$ and $T_v$ contain at least one edge. The vertex set of $L(T) - \{uv\}$ is the disjoint union of the edge sets $E(T_u)$ and $E(T_v)$. For an edge to exist between two vertices in a line graph, their corresponding edges in the original graph must share a vertex. However, no edge in $T_u$ shares a vertex with any edge in $T_v$. Consequently, in the graph $L(T) - \{uv\}$, there are no edges between any vertex from $E(T_u)$ and any vertex from $E(T_v)$. This means $L(T) - \{uv\}$ is disconnected, making $uv$ a cut vertex.\qed
\end{proof}  


\begin{lemma}\label{treebc}  
Let $T$ be a tree with $|V(T)| \geq 3$. Then the following properties hold:  
\begin{enumerate}[(a)]  
    \item If $u$ is a non-leaf vertex of $T$, then the set of incident edges $E_u= \{e\in E(T): u\in e\}$, which forms a clique in $L(T)$, induces a block $B$ of $L(T)$. \label{treeb}
    \item If $B$ is a block of $L(T)$, then there exists a non-leaf vertex $u$ of $T$ such that $V(B) = E_u$. \label{treec}  
\end{enumerate}  
\end{lemma}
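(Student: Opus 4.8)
The plan is to characterize the blocks of $L(T)$ by relating them precisely to the non-leaf vertices of $T$. For part \eqref{treeb}, I would first verify that $E_u$ induces a clique in $L(T)$: any two edges incident to a common vertex $u$ share that vertex, hence are adjacent in $L(T)$. A clique on at least two vertices is $2$-connected (provided $|E_u| \ge 3$; the case $|E_u| = 2$ is an edge of $L(T)$, which is a block only if it is maximal, so I must handle small cases carefully, noting $u$ non-leaf means $|E_u| \ge 2$). The real work is to show this clique is \emph{maximal} $2$-connected, i.e. a genuine block. I would argue that no vertex outside $E_u$ can be added while preserving $2$-connectivity: a vertex $f \in V(L(T)) \setminus E_u$ is an edge of $T$ not incident to $u$, so it meets $E_u$ in at most one vertex of $L(T)$ (namely an edge $e \in E_u$ sharing $f$'s endpoint), which would make that shared edge $e$ a cut vertex separating $f$ from the rest of $E_u$. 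This forces $E_u$ to be a maximal $2$-connected set.

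For part \eqref{treec}, given an arbitrary block $B$ of $L(T)$, I would show its vertex set equals $E_u$ for some non-leaf $u$. Take any two adjacent vertices $e, e'$ in $B$ (a block with $|V(B)|\ge 2$); as edges of $T$ they share a common endpoint, say $u$. I would then argue $u$ must be a non-leaf vertex (since $e,e'$ are distinct edges both meeting $u$, $u$ has degree $\ge 2$), and that $V(B) = E_u$ by combining part \eqref{treeb} with the fact that distinct blocks of a graph share at most one vertex. Since $E_u$ already induces a block $B_u$ containing both $e,e'$, and $B$ is a block containing both, maximality of blocks forces $B = B_u$, hence $V(B)=E_u$. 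A subtle point: I should confirm every block of $L(T)$ has at least one edge, which holds because $L(T)$ is connected with $|V(L(T))|\ge 2$ (as $|V(T)|\ge 3$), so $L(T)$ is not a single isolated vertex.

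The main obstacle I anticipate is the maximality argument in part \eqref{treeb}, specifically ruling out that $E_u$ could be properly contained in a larger $2$-connected subgraph. The key structural fact to exploit is that $T$ is a tree: if $f \notin E_u$ were adjacent in $L(T)$ to some $e \in E_u$, then $f$ and $e$ share an endpoint $w \ne u$, and because $T$ is acyclic, $f$'s other endpoint cannot connect back to $u$'s neighborhood without creating a cycle in $T$. This tree property is what guarantees the shared edge $e$ acts as a cut vertex in $L(T) - u$'s clique, preventing $2$-connected extension. I would make this rigorous by observing that in $L(T)$, any path from $f$ into $E_u$ must pass through an edge of $T$ incident to $u$, and the acyclicity of $T$ limits such connections to a single articulation point.
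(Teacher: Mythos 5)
Your proposal is correct, but it reaches the conclusion by a route that differs from the paper's in both parts. For part \eqref{treeb}, the paper does not argue vertex-by-vertex: it observes that any connected subgraph of $L(T)$ properly containing the clique $E_u$ is induced by the edge set of a connected subtree $T'$ whose vertex set properly contains $\{u\}\cup\{v:uv\in E(T)\}$, and then applies Lemma~\ref{treea} to $T'$ to exhibit a cut vertex inside that subgraph. Your argument instead identifies, for each edge $f\notin E_u$, the unique ``gateway'' vertex $e_f\in E_u$ through which every $L(T)$-path from $f$ to $E_u$ must pass. Note that your one-vertex extension claim alone (that $E_u\cup\{f\}$ induces a subgraph with a cut vertex) would \emph{not} suffice, since maximality quantifies over all supergraphs, not just one-vertex extensions; but the strengthening in your final paragraph -- all paths from $f$ enter $E_u$ at the same vertex, by acyclicity of $T$ -- closes exactly this gap and is rigorizable (a connected subgraph of a tree containing both an endpoint of $f$ and the vertex $u$ must contain every edge of the unique path between them, in particular $e_f$). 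Your worry about the $|E_u|=2$ case is harmless: both arguments really establish ``maximal subgraph with no cut vertex,'' which is the reading of ``block'' the paper uses throughout (its tree theorem treats $K_2$ blocks). For part \eqref{treec} the difference is more substantial: the paper takes the subtree $T'$ with $E(T')=V(B)$, uses Lemma~\ref{treea} to conclude every edge of $T'$ is incident to a leaf of $T'$, deduces that $T'$ is a star, and finishes by maximality; you instead take two adjacent vertices of $B$, their common endpoint $u$, and combine part \eqref{treeb} with the standard fact that two distinct blocks share at most one vertex to force $B=B_u$. Your route is shorter and avoids the star characterization, at the price of importing that block-intersection fact (standard, but nowhere stated in the paper); the paper's route is self-contained, relying only on its own Lemma~\ref{treea}.
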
  

\begin{proof}  
We first prove \ref{treeb}. Any two edges in $E_u$ are incident at vertex $u$, which means they are adjacent in $L(T)$. Thus, $E_u$ forms a clique in $L(T)$. 

Let $T'$ be a connected subgraph of $T$ such that 
$(\{u\} \cup \{v \mid uv \in E(T)\}) \subsetneqq  V(T')$.
By the connectivity of $T'$, there must exist vertices $v$ and $w$ such that $uv, vw \in E(T')$. In this case, neither $u$ nor $v$ is a leaf in $T'$, and by Lemma \ref{treea}, $uv$ is a cut vertex of $L(T')$. Thus, the subgraph of $L(T)$ induced by $E(T')$, denoted as $B'$, contains a cut vertex. Therefore, $B$ is a maximal subgraph of $L(T)$ that contains no cut vertices, meaning $B$ is a block.  

Now, we prove \ref{treec}. Let $T'$ be a subgraph of $T$ such that $E(T') = V(B)$. Since $B$ is a block, by Lemma \ref{treea}, every edge in $T'$ must be incident to a leaf of $T'$. A tree with this property must be a star, meaning there exists a vertex $u$ such that $E(T') \subseteq E_u$. Moreover, since $B$ is a block, $|E(T')| = |V(B)|\ge 2$, which implies that $u$ is not a leaf. Finally, by \ref{treeb} and the maximality of the block, $E(T') = E_u$. Thus, $V(B) = E_u$.  \qed
\end{proof}  

By Lemma \ref{treebc}, each block of $L(T)$ is a complete graph $K_n$ for some $n$. Recalling the decomposition property (Lemma \ref{lu}), $\Psi(L(T)$ is the sum of a number of $\Psi_{D}(K_n)$. The value of $\Psi_{D}(K_n)$ has been completely determined by Lu et~al.~\cite{lu2022algorithmic} as follows:
\begin{lemma}
\label{block}
Let $K_n$ be a complete graph of order $n\ge 2$. For any $D\subseteq V(K_n)$, 
\[\Psi_{D}(K_n)=\begin{cases}
    n-|D| & \text{if } n=2 \text{ or } D=V(K_n),\\
    n-1-|D| & \text{if } n>2 \text{ and } D\neq V(K_n).\\
\end{cases}\]
\end{lemma}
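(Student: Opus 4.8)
The plan is to reduce the doubly resolving condition in $K_n$ to a simple combinatorial statement about membership in $S$, exploiting that every pairwise distance in a complete graph equals $1$. Concretely, for a vertex $w$ and a landmark $x$ one has $d_{K_n}(w,x)=0$ if $w=x$ and $d_{K_n}(w,x)=1$ otherwise. I would first record the consequence that any two distinct vertices $u,v$ both lying outside $S$ satisfy $d_{K_n}(u,x)-d_{K_n}(u,y)=0=d_{K_n}(v,x)-d_{K_n}(v,y)$ for every pair $\{x,y\}\subseteq S$, since all their distances into $S$ equal $1$; hence no pair can doubly resolve them. This is the key \emph{collapse observation}: at most one vertex may lie outside any DRS of $K_n$.

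From the collapse observation I would extract the lower bound $|S|\ge n-1$ whenever $n\ge 3$, and separately note that the unique DRS of $K_2$ is its whole vertex set (a one-element set admits no pair $\{x,y\}$ at all). Because a $D$-DRS must contain $D$, this already yields $\Psi_D(K_n)\ge n-1-|D|$ for $n\ge3$. The two boundary regimes then give the first branch of the formula: for $n=2$ the set $S$ is forced to be $V(K_2)$, so $\Psi_D(K_2)=2-|D|$; and when $D=V(K_n)$ the containment $S\supseteq D$ forces $S=V(K_n)$, giving $\Psi_D=0=n-|D|$.

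For the matching upper bound in the remaining case $n\ge 3$ and $D\ne V(K_n)$, I would exhibit an explicit DRS of size $n-1$ that contains $D$: pick any vertex $w\notin D$ (which exists since $D\ne V(K_n)$) and set $S=V(K_n)\setminus\{w\}$, so that $|S|=n-1\ge 2$ and $D\subseteq S$. Verifying that $S$ doubly resolves every pair $\{u,v\}$ is a short two-case check. If both $u,v\in S$, then $\{u,v\}$ itself works, since $d_{K_n}(u,u)-d_{K_n}(u,v)=-1\ne 1=d_{K_n}(v,u)-d_{K_n}(v,v)$. If one of them, say $v$, equals $w$, then choosing $x=u\in S$ and any $y\in S\setminus\{u\}$ (available because $|S|\ge 2$) gives $d_{K_n}(u,x)-d_{K_n}(u,y)=-1\ne 0=d_{K_n}(w,x)-d_{K_n}(w,y)$. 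Hence $\Psi_D(K_n)\le n-1-|D|$, matching the lower bound.

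The computations here are all elementary; the only real care lies in the bookkeeping of the two degenerate regimes $n=2$ and $D=V(K_n)$, where the minimum DRS is forced to be all of $V(K_n)$ and the value jumps from $n-1-|D|$ to $n-|D|$. I would therefore present the collapse observation first, dispatch these degenerate cases explicitly, and only then treat the generic case, taking care in the construction to omit a vertex $w$ outside $D$ so that the size-$(n-1)$ DRS genuinely contains $D$.
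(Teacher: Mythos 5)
Your proof is correct. Note, however, that the paper does not actually prove this lemma at all: it imports the value of $\Psi_D(K_n)$ wholesale from Lu, Ye and Zhu \cite{lu2022algorithmic}, so there is no internal proof to compare against. What you have written is a complete, self-contained replacement for that citation, and the structure is exactly the natural one: the collapse observation (any two vertices outside $S$ have identical distance vectors $(1,\dots,1)$ into $S$, hence cannot be doubly resolved) gives the lower bound $|S|\ge n-1$; the degenerate regimes $n=2$ and $D=V(K_n)$ are forced to $S=V(K_n)$; and in the generic case $n\ge 3$, $D\ne V(K_n)$, deleting a vertex $w\notin D$ yields a $D$-DRS of size $n-1$, verified by the two-case check (for $u,v\in S$ the pair $\{u,v\}$ gives $-1\ne 1$; for $v=w$ the pair $\{u,y\}$ gives $-1\ne 0$). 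The only cosmetic point worth tightening is in the case $D=V(K_n)$: you should state explicitly that $V(K_n)$ is itself a DRS, though this is already implied by your ``both $u,v\in S$'' check. Your argument has the advantage of keeping the paper self-contained at essentially no cost, since the whole proof rests on the single fact that all distances in $K_n$ lie in $\{0,1\}$.
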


Having characterized the cut vertices and the block structure of $L(T)$, we are now equipped to prove our main theorem for this section.

\begin{theorem}\label{treethm}  
If $T$ is a tree with $|V(T)| \geq 3$, then $\Psi(L(T)) = \sigma(T) - ex'(T)$.  
\end{theorem}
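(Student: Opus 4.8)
The plan is to reduce the computation of $\Psi(L(T))$ to independent computations on the blocks of $L(T)$ via the decomposition Lemma \ref{lu}, and then to evaluate each block's contribution using Lemma \ref{block}. By Lemma \ref{treebc}, the blocks of $L(T)$ are exactly the cliques $B_u = K_{\deg(u)}$ induced by the edge sets $E_u$ of the non-leaf vertices $u$ of $T$; by Lemma \ref{treea}, a vertex $uv$ of $L(T)$ is a cut vertex precisely when both $u$ and $v$ are non-leaves. Hence, writing $\ell_u$ for the number of leaf neighbors of $u$, the set $R_u$ of cut vertices lying in block $B_u$ consists of the edges joining $u$ to its non-leaf neighbors, so that $|R_u| = \deg(u) - \ell_u$. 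Establishing this identification of $R_u$ cleanly is the conceptual heart of the argument.

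With this setup, Lemma \ref{lu} gives $\Psi(L(T)) = \sum_u \Psi_{R_u}(B_u)$, the sum ranging over the non-leaf vertices $u$. The next step is to evaluate each term $\Psi_{R_u}(K_{\deg(u)})$ by the case analysis of Lemma \ref{block}. When $\deg(u) = 2$, the first clause yields $\Psi_{R_u}(K_2) = 2 - |R_u| = \ell_u$. When $\deg(u) \geq 3$ and $\ell_u = 0$, every incident edge runs to a non-leaf, so $R_u = V(B_u)$ and the first clause again gives $\deg(u) - |R_u| = 0 = \ell_u$. Finally, when $\deg(u) \geq 3$ and $\ell_u \geq 1$, we have $R_u \neq V(B_u)$, so the second clause gives $\deg(u) - 1 - |R_u| = \ell_u - 1$. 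In summary, each block contributes $\ell_u$, except when $u$ is a major vertex possessing a leaf neighbor, in which case it contributes $\ell_u - 1$.

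Summing these contributions then finishes the proof. Since $|V(T)| \geq 3$ forces every leaf to have a non-leaf neighbor, each leaf is counted exactly once, so $\sum_u \ell_u = \sigma(T)$, the sum being over non-leaf $u$. The number of major vertices $u$ with $\ell_u \geq 1$ is, by definition, exactly $ex'(T)$, and each such vertex subtracts one, yielding $\Psi(L(T)) = \sigma(T) - ex'(T)$. The one situation not covered by Lemma \ref{lu} is when $L(T)$ is $2$-connected, i.e., has no cut vertex; by Lemma \ref{treea} this occurs exactly when $T$ is a star $K_{1,n}$, so I would dispatch this base case separately by computing $\Psi(K_n)$ directly from Lemma \ref{block} (namely $n-1$ for $n \geq 3$ and $2$ for $n = 2$) and verifying it against $\sigma(T) - ex'(T)$.

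I expect the main obstacle to be bookkeeping rather than any genuine depth: the delicate points are (i) justifying that $R_u$ is precisely the set of edges from $u$ to its non-leaf neighbors, hence $|R_u| = \deg(u) - \ell_u$, and (ii) tracking exactly when $R_u = V(B_u)$ so that the correct clause of Lemma \ref{block} is invoked. This dichotomy between major vertices with and without a leaf neighbor is what produces the $-ex'(T)$ correction term, and it is also where a careless treatment of the degree-$2$ and star cases could introduce an error, so I would verify the final summation on a small example such as the tree in Fig.~\ref{tree2} before committing to it.
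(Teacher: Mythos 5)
Your proposal is correct and follows essentially the same route as the paper: decompose $L(T)$ into its clique blocks via Lemmas \ref{treea} and \ref{treebc}, apply the decomposition Lemma \ref{lu}, and evaluate each block's contribution with Lemma \ref{block}, using the same three-case analysis (degree-$2$ vertices, major vertices without leaf neighbors, strong exterior major vertices) that yields the $-ex'(T)$ correction. Your explicit treatment of the case where $L(T)$ is $2$-connected (i.e., $T$ is a star), which Lemma \ref{lu} as stated does not cover, is in fact slightly more careful than the paper's own proof, which applies the decomposition without comment.
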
  

\begin{proof}  
Let $u_1, \dots, u_p$ be all the non-leaf vertices of $T$. 
By Lemma \ref{treebc}, the blocks of $L(T)$ are the subgraphs $G_i$ induced by the edge sets $E_{u_i} = \{u_iv : u_iv\in E(T)\}$ for $i=1, \dots, p$.
Let $R$ be the set of cut vertices in $L(T)$. For each block $G_i$, let $V_i = V(G_i)$ and $R_i = R \cap V_i$. By Lemma \ref{treea}, $R_i$ consists of edges $u_iv$ where $v$ is not a leaf, so $R_i = \{u_iv : d(v) \geq 2\}$. Applying the decomposition in Lemma \ref{lu}, we have
$\Psi(L(T)) = \sum_{i=1}^p \Psi_{R_i}(G_i)$.

To evaluate this sum, we analyze $\Psi_{R_i}(G_i)$ by considering the properties of the vertex $u_i$. Let $\sigma_i = |\{u_iv : d(v) = 1\}|$ be the number of leaves adjacent to $u_i$. Note that $\sigma_i = |V_i| - |R_i|$. 
We consider three cases:
\begin{enumerate}[label=\textbf{Case~\arabic*:}~, leftmargin=0pt, labelindent=0pt, labelsep=0em, align=left]
    \item $u_i$ is not a major vertex. This means $d(u_i)=2$, so the block $G_i$ is a $K_2$. According to Lemma~\ref{block}, $\Psi_{R_i}(G_i) = |V_i| - |R_i| = \sigma_i$.

    \item $u_i$ is a major vertex but not a strong exterior major vertex. By definition, $u_i$ is not adjacent to any leaves, so $\sigma_i=0$ and $R_i=V_i$. According to Lemma~\ref{block}, since $D=V(K_n)$, we have $\Psi_{R_i}(G_i) = |V_i|-|R_i|=0$, which is equal to $\sigma_i$.

    \item $u_i$ is a strong exterior major vertex. By definition, $u_i$ is a major vertex ($|V_i| = d(u_i) \geq 3$) and is adjacent to at least one leaf, which implies $R_i \neq V_i$. By Lemma~\ref{block}, we have $\Psi_{R_i}(G_i) = |V_i| - 1 - |R_i| = \sigma_i - 1$.
\end{enumerate}

Therefore, $\Psi(L(T)) = \sum_{i=1}^p \Psi_{R_i}(G_i) = \sigma(T) - ex'(T)$.  \qed
\end{proof}

The proof of Theorem~\ref{treethm} is constructive and yields a linear-time algorithm to find a minimum DRS set of $L(T)$. The algorithm begins by computing the degree of all vertices. An initial candidate set $S$ is then constructed by collecting all edges incident to leaves (degree-1 vertices). During this process, if a parent has a degree of at least 3, it is added to a set of strong exterior major vertices, $\mathrm{EX}'(T)$. Finally, for each vertex $v \in \mathrm{EX}'(T)$, one edge incident to $v$ is removed from $S$, ensuring the final set has cardinality $\sigma(T) - ex'(T)$. As each step can be completed in linear time, the total time complexity to construct a minimum DRS for $L(T)$ is $O(|V(T)|)$.

\section{Conclusion}

In this paper, we investigated the minimum DRS problem for line graphs.  First, we prove that it remains NP-hard even for the restricted class of line graphs of bipartite graphs. Second, for a connected graph $G$ with at least three vertices, we prove tight inequalities $\lceil \log_2 (1+\Delta(G))\rceil \le \Psi(L(G)) \le |V(G)| - 1$. Finally, for any tree $T$, we determine $\Psi(L(T)) = \sigma(T) - ex'(T)$ by introducing the new concept of strong exterior major vertices and giving a linear time algorithm for finding a minimum DRS of $L(T)$. 

While we have solved the problem for line graphs of trees, determining the exact value of $\Psi(L(G))$ for other important graph classes remains an interesting problem.  Furthermore, characterizing the families of graphs for which our upper and lower bounds are met could provide deeper insights into the extremal properties.

\begin{credits}
\subsubsection{\ackname} This research was supported in part by the National Natural Science Foundation of China (NSFC) under grant number 12331014 and the Science and Technology Commission of Shanghai Municipality under grant number 22DZ2229014.

\subsubsection{\discintname}
The author has no competing interests to declare that are relevant to the content of this article.
\end{credits}

\bibliographystyle{splncs04}

\clearpage
\appendix

\section{NP-hardness proof} 
\subsection{Graph construction}\label{apx:graph}
For completeness, we provide a redrawn version of Fig.~7 in \cite{epstein2015weighted}, illustrating the graph $G=(I\cup J,E)$ constructed from the given $3$-dimensional matching instance $(A,B,C,S)$. Note that the graph is actually much larger than what is depicted in Fig. \ref{etunpc}, as each vertex in $A\cup B\cup C\cup S$ should have $N$ copies, whereas the figure displays only one of these $N$ copies.
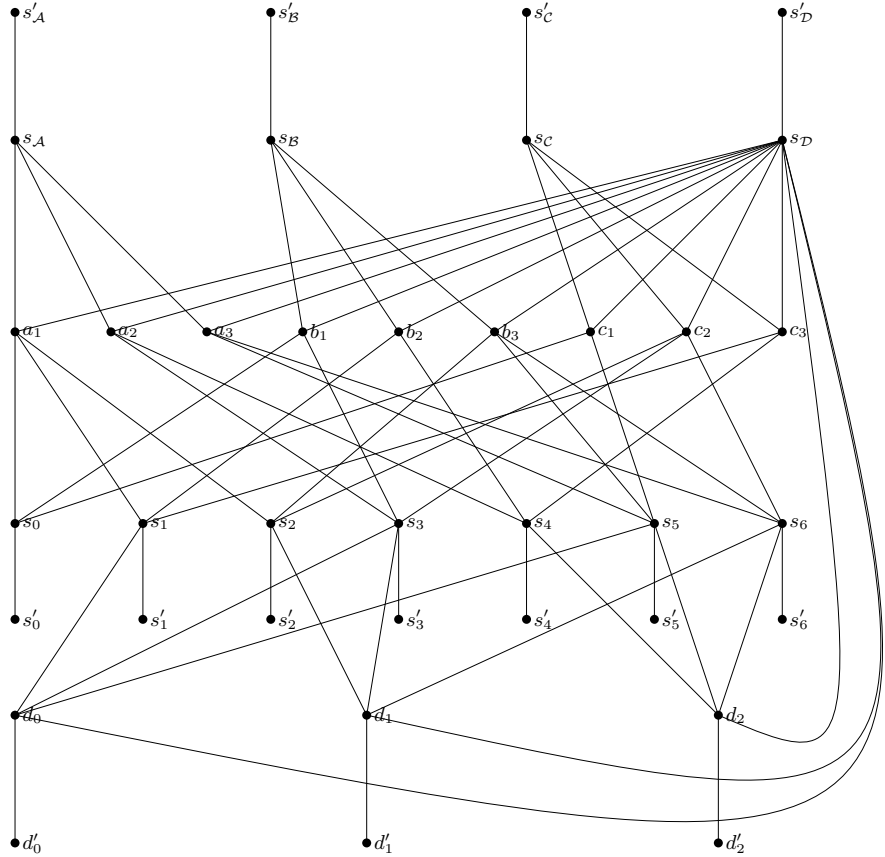
\begin{figure}[H]
\centering
\scalebox{0.85}{
\begin{tikzpicture}

\coordinate[label=right:{$s_{\mathcal{A}}$}] (sa) at (0,0);
\coordinate[label=right:{$s_{\mathcal{B}}$}] (sb) at (4,0);
\coordinate[label=right:{$s_{\mathcal{C}}$}] (sc) at (8,0);
\coordinate[label=right:{$s_{\mathcal{D}}$}] (sd) at (12,0);
\fill (sa) circle[radius=2pt];
\fill (sb) circle[radius=2pt];
\fill (sc) circle[radius=2pt];
\fill (sd) circle[radius=2pt];

\coordinate[label=right:{$s'_{\mathcal{A}}$}] (sa2) at (0,2);
\coordinate[label=right:{$s'_{\mathcal{B}}$}] (sb2) at (4,2);
\coordinate[label=right:{$s'_{\mathcal{C}}$}] (sc2) at (8,2);
\coordinate[label=right:{$s'_{\mathcal{D}}$}] (sd2) at (12,2);
\fill (sa2) circle[radius=2pt];
\fill (sb2) circle[radius=2pt];
\fill (sc2) circle[radius=2pt];
\fill (sd2) circle[radius=2pt];
\draw (sa) -- (sa2);
\draw (sb) -- (sb2);
\draw (sc) -- (sc2);
\draw (sd) -- (sd2);

\foreach \i in {1,2,3}
{
\coordinate[label=right:{$a_\i$}] (a\i) at (1.5*\i-1.5,-3);
\fill (a\i) circle[radius=2pt];

\coordinate[label=right:{$b_\i$}] (b\i) at (1.5*\i+3,-3);
\fill (b\i) circle[radius=2pt];

\coordinate[label=right:{$c_\i$}] (c\i) at (1.5*\i+7.5,-3);
\fill (c\i) circle[radius=2pt];

\draw (sa) -- (a\i);
\draw (sb) -- (b\i);
\draw (sc) -- (c\i);
\draw (sd) -- (a\i);
\draw (sd) -- (b\i);
\draw (sd) -- (c\i);
}
\foreach \i in {0,...,6}
{
\coordinate[label=right:{$s_\i$}] (s\i) at (2*\i,-6);
\fill (s\i) circle[radius=2pt];
}

\foreach \i in {0,...,6}
{
\coordinate[label=right:{$s'_\i$}] (s2\i) at (2*\i,-7.5);
\fill (s2\i) circle[radius=2pt];
\draw (s\i) -- (s2\i);
}

\foreach \i in {0,...,2}
{
\coordinate[label=right:{$d_\i$}] (d\i) at (5.5*\i,-9);
\fill (d\i) circle[radius=2pt];
}
\foreach \i in {0,...,2}
{
\coordinate[label=right:{$d'_\i$}] (d2\i) at (5.5*\i,-11);
\fill (d2\i) circle[radius=2pt];
\draw (d\i) -- (d2\i);
}

\draw (sd) .. controls (40/3,-10) .. (d2);
\draw (sd) .. controls (44/3,-11) .. (d1);
\draw (sd) .. controls (15,-12) .. (d0);

\foreach \i in {0,1,2}
{
\draw (a1) -- (s\i);
}
\foreach \i in {3,4}
{
\draw (a2) -- (s\i);
}
\foreach \i in {5,6}
{
\draw (a3) -- (s\i);
}
\foreach \i in {0,3}
{
\draw (b1) -- (s\i);
}
\foreach \i in {1,4}
{
\draw (b2) -- (s\i);
}
\foreach \i in {2,5,6}
{
\draw (b3) -- (s\i);
}
\foreach \i in {0,5}
{
\draw (c1) -- (s\i);
}
\foreach \i in {2,3,6}
{
\draw (c2) -- (s\i);
}
\foreach \i in {1,4}
{
\draw (c3) -- (s\i);
}

\foreach \i in {1,3,5}
{
\draw (d0) -- (s\i);
}
\foreach \i in {2,3,6}
{
\draw (d1) -- (s\i);
}
\foreach \i in {4,5,6}
{
\draw (d2) -- (s\i);
}

\end{tikzpicture}
}
\caption{An illustration for constructing  bipartite graph $G=(I\cup J,E)$, where $\mathcal{S}=\{(a_1, b_1, c_1)$, $(a_1, b_2, c_3)$, $(a_1, b_3, c_2)$, $(a_2, b_1, c_2)$, $(a_2, b_2, c_3)$, $(a_3, b_3, c_1)$, $(a_3, b_3, c_2)\}$.} \label{etunpc}
\end{figure}

\subsection{Proof of Lemma \ref{edrsnpc}}\label{apx:lma:edrsnpc}
The proof of Lemma~\ref{edrsnpc} is based on the following lemma by Epstein et~al.~\cite{epstein2015weighted} regarding resolving sets.
\begin{lemma}\label{ersnpc}
\begin{enumerate}[(a)]
\item If $L(G)$ has a resolving set $R$ such that $|R|\le K$, then there is a 3-dimensional matching $S'\subseteq S$. \label{enprsa}
\item If there is a 3-dimensional matching $S'\subseteq S$, then the corresponding $\mathcal{S}'\subseteq \mathcal{S}$ gives rise to the set
    \[R=\{s_is_i':s_i\in \mathcal{S}'\}\cup \{s_{\mathcal{A}}s'_{\mathcal{A}}, s_{\mathcal{B}}s'_{\mathcal{B}}, s_{\mathcal{C}}s'_{\mathcal{C}}, s_{\mathcal{D}}s'_{\mathcal{D}}\}\cup \{d_0d'_0,d_1d'_1,\dots,d_{\lambda-1}d'_{\lambda-1}\},\]
    which is a resolving set of $L(G)$ with $|R|=K$. \label{enprsb}
\end{enumerate}
\end{lemma}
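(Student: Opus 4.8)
The plan is to reduce everything to one distance identity in the line graph. Every landmark in the set $R$ of part~\ref{enprsb} is a \emph{pendant edge} of $G$: each of $s'_i$, $s'_{\mathcal A},\dots,s'_{\mathcal D}$ and $d'_i$ has degree $1$ in $G$, since it occurs only in the cases (8)--(13). For a pendant edge $p=vv'$ with $\deg_G(v')=1$ and any edge $e=xy\neq p$, the nearer endpoint of $p$ is always the non-leaf end $v$ (because $d_G(a,v')=d_G(a,v)+1$), so
\[
d_{L(G)}(e,p)=1+\min\{d_G(x,v),\,d_G(y,v)\}.
\]
Since a resolving set must make the plain distance vectors $\bigl(d_{L(G)}(e,p)\bigr)_{p\in R}$ pairwise distinct, the whole problem becomes: the vectors of values $\min\{d_G(x,v),d_G(y,v)\}$ over the \emph{anchors} $v$ --- the non-leaf endpoints $\{s_i:s_i\in\mathcal S'\}\cup\{s_{\mathcal A},s_{\mathcal B},s_{\mathcal C},s_{\mathcal D}\}\cup\{d_0,\dots,d_{\lambda-1}\}$ --- must separate the edges of $G$. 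I would first tabulate the (small, constant) pairwise $G$-distances among the relevant vertex types, so that each coordinate of a signature takes only a few values.

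For part~\ref{enprsb} I would partition $E(G)$ by type --- element-to-$s_{\mathcal X}$ edges, element-to-triple edges, triple-to-$d_i$ edges, and the pendant edges themselves --- and verify that the signature separates any two distinct edges. Three mechanisms do the work. First, the anchors $d_0,\dots,d_{\lambda-1}$ implement \emph{binary addressing}: by case~(6), $s_j$ is adjacent to $d_i$ exactly when bit $i$ of $j$ is $1$, and a short computation shows that the $d$-coordinates of an edge touching a triple $s_j$ recover the index $j$; this separates edges sitting at different triples. Second, the anchors $s_{\mathcal A},s_{\mathcal B},s_{\mathcal C},s_{\mathcal D}$ separate the classes $\mathcal A,\mathcal B,\mathcal C$ from one another and from the addressing gadget (cases (1)--(4),(7)). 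Third, because $S'$ is a $3$-dimensional matching it covers every element of each copy exactly once, so every element vertex is incident to precisely one chosen triple; the corresponding matching anchor lies at min-distance $1$ from that element and farther from every other element of its class, which \emph{individuates} elements inside a class. Combining the three and running the (routine but lengthy) type-by-type comparison yields that $R$ resolves $L(G)$, and $|R|=|\mathcal S'|+4+\lambda=K$ is immediate.

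Part~\ref{enprsa} is the substantive direction and, I expect, the main obstacle. I would argue it in two stages. The first is a \emph{normalization}: show that any resolving set of size $\le K$ can be replaced, without increasing its size, by one made up only of pendant edges. The degree-$1$ gadget vertices make this possible --- each pendant edge is the unique edge at its leaf, so a landmark may be slid onto an incident pendant edge without losing resolving power --- and it simultaneously isolates the forced cost $\lambda+4$ coming from the $d_i$- and $s_{\mathcal X}$-gadgets, whose associated hard-to-resolve pairs can only be separated near those gadgets. The second stage is a \emph{covering-to-matching} argument: with the gadget cost accounted for, the remaining budget must resolve all element edges, and by the analysis above two elements of the same class sharing the same set of containing chosen triples are never separated; hence the chosen triples must give every element a distinct ``address'', which forces them to cover (essentially all of) the elements, and a cover of all element vertices by triples, each covering exactly one element per class, can only be an exact $3$-dimensional matching. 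The amplification by the $N=2^{12}n$ copies enters precisely here: it inflates the cost of any deviation from the canonical pendant-edge/matching structure so far beyond $K$ that the tight budget leaves no slack, pinning the cover down to a genuine matching. Verifying that normalization never enlarges the set, and that the budget truly forbids a near-cover that is not a matching, are the delicate points where I would concentrate the effort.
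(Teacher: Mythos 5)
The paper never proves Lemma~\ref{ersnpc}: it is imported verbatim from Epstein et~al.~\cite{epstein2015weighted} and used as a black box (together with Lemma~\ref{lpsile}) in the proof of Lemma~\ref{edrsnpc}. So there is no in-paper argument to compare yours against; your attempt has to be judged as a reconstruction of the cited proof, and on that footing it is a plan with one sound half and one half that does not work as stated.

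Your part~(b) skeleton is essentially right. Every landmark in $R$ is indeed a pendant edge, the identity $d_{L(G)}(e,p)=1+\min\{d_G(x,v),d_G(y,v)\}$ is correct (the leaf end $v'$ lies on no edge other than $p$), and the three mechanisms are the correct ones; spot checks confirm them, e.g.\ $d_{L(G)}(as_{\mathcal{A}},s_js'_j)$ equals $2$ if $a\in s_j$ and $3$ otherwise, so matching anchors individuate covered elements, and the $d_kd'_k$ coordinates of any edge touching $s_j$ recover the binary address $j$. One bookkeeping point you silently pass over: for your individuation mechanism to work in \emph{every} copy, $\mathcal{S}'$ must be the union of the $N$ copies of $S'$, giving $|R|=nN+\lambda+4=n'+\lambda+4$; this matches the printed $K=n+\lambda+4$ only if one reads the paper's $K$ as a typo for $n'+\lambda+4$ (note the paper defines $n'=nN$ and never uses it). If instead $|\mathcal{S}'|=n$, your own argument shows $R$ is \emph{not} resolving: two uncovered elements $a,\tilde a$ in another copy give the edges $as_{\mathcal{A}},\tilde as_{\mathcal{A}}$ identical signatures. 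Your ``$|R|=|\mathcal{S}'|+4+\lambda=K$ is immediate'' conceals rather than resolves this.

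The genuine gap is in part~(a), in the normalization step. The claim that ``a landmark may be slid onto an incident pendant edge without losing resolving power'' is false as a general principle, and false in this very graph: sliding a landmark $as_{\mathcal{A}}$ onto the incident pendant edge $s_{\mathcal{A}}s'_{\mathcal{A}}$ erases all dependence on $a$ (distances from $s_{\mathcal{A}}s'_{\mathcal{A}}$ are the same for every $\mathcal{A}$-element), so any pair such as $\{as_{\mathcal{D}},\tilde as_{\mathcal{D}}\}$ that was separated only by that landmark becomes unresolved; moreover two distinct landmarks $as_{\mathcal{A}},\tilde as_{\mathcal{A}}$ slide onto the \emph{same} pendant edge, shrinking the set. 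A correct argument cannot normalize first; it must instead force structure directly, by exhibiting for each gadget specific hard pairs that only landmarks in that gadget's vicinity can resolve, and then count. Relatedly, your description of how $N=2^{12}n$ enters (``inflates the cost of any deviation so far beyond $K$'') points in the wrong direction: with budget $\approx nN$ the operative step is averaging/pigeonhole over the $N$ copies --- some copy receives at most $n$ landmarks, and the covering-to-matching analysis is then applied inside that single copy, whose matching pulls back to $S'\subseteq S$. As written, part~(a) leaves its two load-bearing steps (normalization and per-copy forcing) unproven, and the first would fail in the stated generality.
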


The following lemma from \cite{lu2022bridge} establishes a connection between the resolving set and the doubly resolving set. Given a vertex $x\in V(G)$, a vertex subset $S$ of $G$ is a \emph{doubly distance resolving set} of $G$ on $x$ if every pair of vertices $\{u,v\}$ with $d_G(u,x)\neq d_G(v,x)$ is doubly resolved by some pair of vertices in $S\cup \{x\}$.

\begin{lemma}\label{lpsile}
Let $S$ be a resolving set of $G$. Let $x\in S$ and $T$ be a doubly distance resolving set of $G$ on $x$. Then $S\cup T$ is a DRS of $G$.
\end{lemma}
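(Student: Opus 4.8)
The plan is to fix an arbitrary pair of distinct vertices $u,v\in V(G)$ and produce a pair $\{y,z\}\subseteq S\cup T$ that doubly resolves $\{u,v\}$, i.e.\ that satisfies $d_G(u,y)-d_G(u,z)\neq d_G(v,y)-d_G(v,z)$. The decisive observation is that the anchor vertex $x$ lies in $S$, so both $S$ and $T\cup\{x\}$ are subsets of $S\cup T$; this lets me freely combine the resolving property of $S$ with the doubly-distance-resolving property of $T$. I would split the argument according to whether $x$ already separates $u$ and $v$ by distance.

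In the first case, suppose $d_G(u,x)\neq d_G(v,x)$. Then the defining property of a doubly distance resolving set of $G$ on $x$ applies directly to $T$: the pair $\{u,v\}$ is doubly resolved by some pair of vertices in $T\cup\{x\}$. Since $x\in S$, we have $T\cup\{x\}\subseteq S\cup T$, so this doubly resolving pair already lies in $S\cup T$, finishing the case.

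In the second case, suppose $d_G(u,x)=d_G(v,x)$. Because $S$ is a resolving set, there is a vertex $w\in S$ with $d_G(u,w)\neq d_G(v,w)$. I would then verify that the pair $\{w,x\}\subseteq S\subseteq S\cup T$ doubly resolves $\{u,v\}$: using $d_G(u,x)=d_G(v,x)$, the target inequality $d_G(u,w)-d_G(u,x)\neq d_G(v,w)-d_G(v,x)$ reduces to $d_G(u,w)\neq d_G(v,w)$, which holds by the choice of $w$.

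Since the two cases are exhaustive and each produces a doubly resolving pair inside $S\cup T$, the set $S\cup T$ is a DRS of $G$. There is no substantial obstacle here; the only points requiring care are to record that $x\in S$ guarantees $x\in S\cup T$ (so that $x$ may be paired with the witness $w$ in the second case, and so that the pair supplied by $T$ in the first case remains inside $S\cup T$), and to note that the equal-distance hypothesis in the second case is exactly what collapses the distance-difference comparison into a plain distance comparison resolved by $S$.
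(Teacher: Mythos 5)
Your proof is correct and complete. The paper itself does not prove this lemma --- it is quoted from Lu and Ye \cite{lu2022bridge} and used as a black box in the proof of Lemma \ref{edrsnpc} --- so there is no in-paper argument to compare against; your two-case split on whether $x$ already separates $u$ and $v$ by distance is the natural argument for this statement. One small point worth making explicit: in your second case the witness $w$ is automatically distinct from $x$, since $d_G(u,w)\neq d_G(v,w)$ while $d_G(u,x)=d_G(v,x)$, so $\{w,x\}$ is a genuine pair and the reduction of the distance-difference inequality to $d_G(u,w)\neq d_G(v,w)$ goes through exactly as you say.
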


Now we are ready to prove Lemma~\ref{edrsnpc}, which yields the NP-hardness of the minimum DRS problem on the line graphs of bipartite graphs.
\begin{proof}[Proof of Lemma \ref{edrsnpc}]
    Part \ref{enpdrsa} follows directly from Lemma~\ref{ersnpc}\ref{enprsa}, since any DRS is also a resolving set. To prove part \ref{enpdrsb}, by Lemmas~\ref{lpsile} and~\ref{ersnpc}\ref{enprsb}, we only need to show that

    \[R'=\{s_{\mathcal{A}}s'_{\mathcal{A}}, s_{\mathcal{B}}s'_{\mathcal{B}}, s_{\mathcal{C}}s'_{\mathcal{C}}\}\cup \{d_0d'_0,d_1d'_1,\dots,d_{\lambda-1}d'_{\lambda-1}\}\]
    is a doubly distance resolving set on $s_{\mathcal{D}}s'_{\mathcal{D}}$. Let us partition the edge set of $G$ into four sets based on their distance from $s_{\mathcal{D}}s'_{\mathcal{D}}$ in $L(G)$:
    \begin{align*}
        E_0&=\{s_{\mathcal{D}}s'_{\mathcal{D}}\}\\
        E_1&=\{s_{\mathcal{D}}a_i\}\cup \{s_{\mathcal{D}}b_i\}\cup \{s_{\mathcal{D}}c_i\}\cup \{s_{\mathcal{D}}d_k\}\\
        E_2&=\{s_{\mathcal{A}}a_i\} \cup \{s_{\mathcal{B}}b_i\} \cup\{s_{\mathcal{C}}c_i\}\cup \{a_is_j\} \cup \{b_is_j\} \cup \{c_is_j\}\cup \{d_kd'_k\}\cup \{d_ks_j\}\\
        E_3&=\{s_{\mathcal{A}}s'_{\mathcal{A}},s_{\mathcal{B}}s'_{\mathcal{B}},s_{\mathcal{C}}s'_{\mathcal{C}}\}\cup \{s_js'_j\}
    \end{align*}
    To avoid notational clutter, we have omitted the ranges for the indices $i, j,$ and $k$. They are understood to encompass all possible values according to the graph construction.
    It is straightforward to verify that for any $e_k \in E_k$, $d(e_k, s_{\mathcal{D}}s'_{\mathcal{D}}) = k$, and $\bigcup_{k=0}^3 E_k = E(G)$. 
    
    To complete the proof, we must show that any pair of edges $\{e_i, e_j\}$ with $e_i\in E_i$, $e_j\in E_j$, and $i \neq j$, is doubly resolved by $R' \cup \{s_{\mathcal{D}}s'_{\mathcal{D}}\}$. We proceed by considering the following four cases:

    \begin{enumerate}[label=\textbf{Case~\arabic*:}~, leftmargin=0pt, labelindent=0pt, labelsep=0em, align=left]
        \item  
        $e_0=s_{\mathcal{D}}s'_{\mathcal{D}}\in E_0$, $e_1\in E_1$. If $e_1=s_{\mathcal{D}}a_i$, then
        \[d(e_0,e_0)-d(e_0,s_{\mathcal{A}}s'_{\mathcal{A}})=0-3\neq 1-2=d(e_1,e_0)-d(e_1,s_{\mathcal{A}}s'_{\mathcal{A}}),\]
        which shows that $\{e_0,e_1\}$ is doubly resolved by $\{s_{\mathcal{D}}s'_{\mathcal{D}},s_{\mathcal{A}}s'_{\mathcal{A}}\}$. Similarly, if $e_1=s_{\mathcal{D}}b_i$, then $\{e_0,e_1\}$ is doubly resolved by $\{s_{\mathcal{D}}s'_{\mathcal{D}},s_{\mathcal{B}}s'_{\mathcal{B}}\}$. If $e_1=s_{\mathcal{D}}c_i$, then $\{e_0,e_1\}$ is doubly resolved by $\{s_{\mathcal{D}}s'_{\mathcal{D}},s_{\mathcal{C}}s'_{\mathcal{C}}\}$. Finally, if $e_1=s_{\mathcal{D}}d_k$, then
        \[d(e_0,e_0)-d(e_0,d_kd'_k)=0-2\neq 1-1=d(e_1,e_0)-d(e_1,d_kd'_k),\]
        which shows that $\{e_0,e_1\}$  is doubly resolved by $\{s_{\mathcal{D}}s'_{\mathcal{D}},d_kd'_k\}$.

        \item 
        $e_0=s_{\mathcal{D}}s'_{\mathcal{D}}\in E_0$ and $e_2\in E_2$. If $e_2=s_{\mathcal{A}}a_i$, then
        \[d(e_0,e_0)-d(e_0,s_{\mathcal{A}}s'_{\mathcal{A}})=0-3\neq 2-1=d(e_2,e_0)-d(e_2,s_{\mathcal{A}}s'_{\mathcal{A}}),\]
        which shows that $\{e_0,e_2\}$ is doubly resolved by $\{s_{\mathcal{D}}s'_{\mathcal{D}},s_{\mathcal{A}}s'_{\mathcal{A}}\}$. Similarly, if $e_2=s_{\mathcal{B}}b_i$, then $\{e_0,e_2\}$ is doubly resolved by $\{s_{\mathcal{D}}s'_{\mathcal{D}},s_{\mathcal{B}}s'_{\mathcal{B}}\}$. If $e_2=s_{\mathcal{C}}c_i$, then$\{e_0,e_2\}$ is doubly resolved by $\{s_{\mathcal{D}}s'_{\mathcal{D}},s_{\mathcal{C}}s'_{\mathcal{C}}\}$. Now, if $e_2=a_is_j$, then 
        \[d(e_0,e_0)-d(e_0,s_{\mathcal{A}}s'_{\mathcal{A}})=0-3\neq 2-2=d(e_2,e_0)-d(e_2,s_{\mathcal{A}}s'_{\mathcal{A}}),\]
        which shows that $\{e_0,e_2\}$ is doubly resolved by $\{s_{\mathcal{D}}s'_{\mathcal{D}},s_{\mathcal{A}}s'_{\mathcal{A}}\}$. Similarly, if $e_2=b_is_j$, then $\{e_0,e_2\}$ is doubly resolved by $\{s_{\mathcal{D}}s'_{\mathcal{D}},s_{\mathcal{B}}s'_{\mathcal{B}}\}$. If $e_2=c_is_j$, then $\{e_0,e_2\}$ is doubly resolved by $\{s_{\mathcal{D}}s'_{\mathcal{D}},s_{\mathcal{C}}s'_{\mathcal{C}}\}$. Next, if $e_2=d_kd'_k$, then it is obvious that $\{e_0,e_2\}$ is doubly resolved by $\{e_0,e_2\}$. Finally, if $e_2=d_ks_j$, then 
        \[d(e_0,e_0)-d(e_0,d_kd'_k)=0-2\neq 2-1=d(e_2,e_0)-d(e_2,d_kd'_k),\]
        which shows that $\{e_0,e_2\}$ is doubly resolved by $\{s_{\mathcal{D}}s'_{\mathcal{D}},d_kd'_k\}$. 
        
        \item 
        $e_1\in E_1$, $e_2\in E_2$. Suppose $e_1=s_{\mathcal{D}}a_i$. Then $\{e_1,e_2\}$ is doubly resolved by $\{s_{\mathcal{D}}s'_{\mathcal{D}},s_{\mathcal{A}}s'_{\mathcal{A}}\}$ if and only if
        \[d(e_1,s_{\mathcal{D}}s'_{\mathcal{D}})-d(e_1,s_{\mathcal{A}}s'_{\mathcal{A}})=1-2\neq 2-d(e_2,s_{\mathcal{A}}s'_{\mathcal{A}})=d(e_2,s_{\mathcal{D}}s'_{\mathcal{D}})-d(e_2,s_{\mathcal{A}}s'_{\mathcal{A}}),\]
        which shows that $\{e_1,e_2\}$ is doubly resolved by $\{s_{\mathcal{D}}s'_{\mathcal{D}},s_{\mathcal{A}}s'_{\mathcal{A}}\}$ if and only if $d(e_2,s_{\mathcal{A}}s'_{\mathcal{A}})\neq 3$. It is easy to verify that
        \begin{align*}
            d(s_{\mathcal{A}}a_i,s_{\mathcal{A}}s'_{\mathcal{A}})&=1,\\
            d(a_is_j,s_{\mathcal{A}}s'_{\mathcal{A}})&=2,\\
            d(b_is_j,s_{\mathcal{A}}s'_{\mathcal{A}})&=d(c_is_j,s_{\mathcal{A}}s'_{\mathcal{A}})=d(d_ks_j,s_{\mathcal{A}}s'_{\mathcal{A}})=3,\\
            d(s_{\mathcal{B}}b_i,s_{\mathcal{A}}s'_{\mathcal{A}})&=d(s_{\mathcal{C}}c_i,s_{\mathcal{A}}s'_{\mathcal{A}})=d(d_kd'_k,s_{\mathcal{A}}s'_{\mathcal{A}})=4.
        \end{align*}
        Therefore, $\{e_1,e_2\}$ is doubly resolved by $\{s_{\mathcal{D}}s'_{\mathcal{D}},s_{\mathcal{A}}s'_{\mathcal{A}}\}$ if and only if $e_2\in \{s_{\mathcal{A}}a_i\} \cup \{s_{\mathcal{B}}b_i\} \cup\{s_{\mathcal{C}}c_i\}\cup \{a_is_j\} \cup \{d_kd'_k\}$. Similarly, if $e_2=b_is_j$, $\{e_1,e_2\}$ is doubly resolved by $\{s_{\mathcal{D}}s'_{\mathcal{D}},s_{\mathcal{B}}s'_{\mathcal{B}}\}$. if $e_2=c_is_j$, $\{e_1,e_2\}$ is doubly resolved by $\{s_{\mathcal{D}}s'_{\mathcal{D}},s_{\mathcal{C}}s'_{\mathcal{C}}\}$. Finally, if $e_2=d_ks_j$, then 
        \[d(e_1,s_{\mathcal{D}}s'_{\mathcal{D}})-d(e_1,d_kd'_k)=1-2\neq 2-1=d(e_2,s_{\mathcal{D}}s'_{\mathcal{D}})-d(e_2,d_kd'_k),\]
        which shows that $\{e_1,e_2\}$ is doubly resolved by $\{s_{\mathcal{D}}s'_{\mathcal{D}},d_kd'_k\}$. For the cases where $e_1 = s_{\mathcal{D}}b_i$ or $e_1 = s_{\mathcal{D}}c_i$, the proof follows similarly.
        Now assume $e_1=s_{\mathcal{D}}d_k$. Then $\{e_1,e_2\}$ is doubly resolved by $\{s_{\mathcal{D}}s'_{\mathcal{D}},s_{\mathcal{A}}s'_{\mathcal{A}}\}$ if and only if
        \[d(e_1,s_{\mathcal{D}}s'_{\mathcal{D}})-d(e_1,s_{\mathcal{A}}s'_{\mathcal{A}})=1-3\neq 2-d(e_2,s_{\mathcal{A}}s'_{\mathcal{A}})=d(e_2,s_{\mathcal{D}}s'_{\mathcal{D}})-d(e_2,s_{\mathcal{A}}s'_{\mathcal{A}}),\]
        which shows that $\{e_1,e_2\}$ is doubly resolved by $\{s_{\mathcal{D}}s'_{\mathcal{D}},s_{\mathcal{A}}s'_{\mathcal{A}}\}$ if and only if $d(e_2,s_{\mathcal{A}}s'_{\mathcal{A}})\neq 4$. Therefore, $\{e_1,e_2\}$ is doubly resolved by $\{s_{\mathcal{D}}s'_{\mathcal{D}},s_{\mathcal{A}}s'_{\mathcal{A}}\}$ if and only if $e_2\in \{s_{\mathcal{A}}a_i\} \cup \{a_is_j\} \cup \{b_is_j\} \cup \{c_is_j\}\cup \{d_ks_j\}$. Similarly, if $e_2=s_{\mathcal{B}}b_i$, $\{e_1,e_2\}$ is doubly resolved by $\{s_{\mathcal{D}}s'_{\mathcal{D}},s_{\mathcal{B}}s'_{\mathcal{B}}\}$. If $e_2=s_{\mathcal{C}}c_i$, $\{e_1,e_2\}$ is doubly resolved by $\{s_{\mathcal{D}}s'_{\mathcal{D}},s_{\mathcal{C}}s'_{\mathcal{C}}\}$. Finally, if $e_2=d_kd'_k$, then 
        \[d(e_1,s_{\mathcal{D}}s'_{\mathcal{D}})-d(e_1,d_kd'_k)=1-1\neq 2-0=d(e_2,s_{\mathcal{D}}s'_{\mathcal{D}})-d(e_2,d_kd'_k),\]
        which shows that $\{e_1,e_2\}$ is doubly resolved by $\{s_{\mathcal{D}}s'_{\mathcal{D}},d_kd'_k\}$. 
        
        \item 
        $e\in E_0\cup E_1\cup E_2$, $e_3\in E_3$. If $e_3\in \{s_{\mathcal{A}}s'_{\mathcal{A}},s_{\mathcal{B}}s'_{\mathcal{B}},s_{\mathcal{C}}s'_{\mathcal{C}}\}$, then 
        \[d(e,s_{\mathcal{D}}s'_{\mathcal{D}})-d(e,e_3)\le 2-1< 3-0=d(e_3,s_{\mathcal{D}}s'_{\mathcal{D}})-d(e_3,e_3),\]
        which shows that $\{e,e_3\}$ is doubly resolved by $\{s_{\mathcal{D}}s'_{\mathcal{D}},e_3\}$. If $e_3=s_js'_j$, then
        \[d(e_3,s_{\mathcal{D}}s'_{\mathcal{D}})-d(e_3,s_{\mathcal{A}}s'_{\mathcal{A}})=3-3=0.\]
        It is not hard to check that $d(e,s_{\mathcal{D}}s'_{\mathcal{D}})-d(e,s_{\mathcal{A}}s'_{\mathcal{A}})=0$ if and only if $e=a_is_j$. 
        In other words, if $e$ is not of the form $a_is_j$, then $\{e, e_3\}$ is doubly resolved by $\{s_{\mathcal{D}}s'_{\mathcal{D}}, s_{\mathcal{A}}s'_{\mathcal{A}}\}$. 
        Finally, by symmetry, if $e=a_is_j$, then $\{e,e_3\}$ is doubly resolved by $\{s_{\mathcal{D}}s'_{\mathcal{D}},s_{\mathcal{B}}s'_{\mathcal{B}}\}$. \qed
    \end{enumerate}
\end{proof}

\section{Proof of Proposition \ref{linelower}}\label{apx:prop:linelower}
\begin{proof}
    We begin by establishing equalities and inequalities that will be essential for the proof. Firstly, we prove that \( k \ge m \). Let \( f(k) = k - \log_2(1+k) \), then
    \[f'(k)=1-\frac{1}{\ln 2\cdot (1+k)}\ge 1-\frac{1}{\ln 8}>0.\]
    Therefore, $k-m=\lfloor f(k) \rfloor\ge \lfloor f(2) \rfloor=0$. 
    Secondly, by the definition of \( m \), we know that $m-1<\log_2 (1+k)\le m$, hence $2^{m-1}<1+k\le 2^m$. From this, we get $2^{m-1}\le k<2^m$, so $m-1=\lfloor\log_2 k \rfloor$. 
    
    To prove that $\Psi(L(A_k))\ge m$, it suffices to show that $\Delta(A_k)=k$, as the result then follows directly from Theorem \ref{main}.
    First, $d(u)=k$ and $d(w'_i)=1$. Secondly, for each $1\le j\le k$, there must be a binary digit that is 1, that is, there exists $i$, such that $\lfloor j/2^i \rfloor \bmod 2 = 1$. 
    In other words, for any $v_j$, there exists $w_i$, such that $v_jw_i\notin E$. Therefore, $d(v_j)\le (m-1)+1=m\le k$. Finally, since for any $0\le i\le m-1$, we have $1\le 2^i\le 2^{m-1}\le k$, and there exists $1\le j\le k$, such that $j=2^i$. That is, for any $w_i$, there must exist $v_j$, such that $v_jw_i\notin E$. Hence, $d(w_i)\le (k-1)+1=k$. 

    Next, we show that $\Psi(L(A_k))\le m$ by constructing a DRS of size $m$. Let $S=\{w_0w'_0,\dots,w_{m-1}w'_{m-1}\}$. We will prove that $S$ is a DRS for $L(A_k)$. The proof relies on a case analysis of pairs of edges, using the following distance formulas within $L(A_k)$:  
    \[
    \begin{aligned}
        d(w_iw'_i, w_{i'}w'_{i'}) &= 
        \begin{cases}
            0 & \text{if } i = i', \\
            3 & \text{if } i \neq i' \text{ and there exists } j \text{ such that } w_iv_j, w_{i'}v_j \in E, \\
            5 & \text{otherwise}.
        \end{cases} \\
        d(w_iw'_i, w_{i'}v_j) &= 
        \begin{cases}
            1 & \text{if } i = i', \\
            2 & \text{if } i \neq i' \text{ and } w_iv_j \in E, \\
            3 & \text{if } w_iv_j \notin E \text{ and there exists } j' \text{ such that } w_iv_{j'}, w_{i'}v_{j'} \in E, \\
            4 & \text{otherwise}.
        \end{cases} \\
        d(w_iw'_i, uv_j) &= 
        \begin{cases}
            2 & \text{if } w_iv_j \in E, \\
            3 & \text{if } w_iv_j \notin E.
        \end{cases}
    \end{aligned}
    \]  
    
    We first clarify the ``otherwise'' cases in our distance formulas, where distances of 4 or 5 can occur. These cases require that for a pair of distinct indices $i, i'$, there is no vertex $v_j$ such that $w_iv_j, w_{i'}v_j \in E$.
    We claim this scenario is impossible when $m \ge 3$. If $m \ge 3$, for any distinct $i, i'$, we can choose a third index $i'' \notin \{i, i'\}$. 
    Let $j = 2^{i''}$. Since $i, i' \neq i''$, the $i$-th and $i'$-th bits of $j$ are both 0. Thus, $w_iv_j, w_{i'}v_j \in E$, which is a contradiction.
    Therefore, these cases only occur when $m = 2$ (i.e., when $k \in \{2, 3\}$). For these scenarios, we can verify directly (see Figs.~\ref{a2} and \ref{a3}).  

    \begin{figure}[ht]
        \centering
        \begin{minipage}[b]{0.45\textwidth}
        \centering
        \begin{tikzpicture}
        \coordinate[label=right:{$u$}] (u) at (2.25,0);
        \fill (u) circle[radius=2pt];

        \foreach \i in {1,...,2}
        {
            \coordinate[label=right:{$v_\i$}] (v\i) at (1.5*\i,1.5);
            \fill (v\i) circle[radius=2pt];
        }

        \foreach \i in {0,1}
        {
            \coordinate[label=right:{$w_\i$}] (w\i) at (-1.5*\i+3,3);
            \coordinate[label=right:{$w'_\i$}] (ww\i) at (-1.5*\i+3,4.5);
            \fill (w\i) circle[radius=2pt];
            \fill (ww\i) circle[radius=2pt];
        }
        \draw (w0) -- node[right] {$5$} (ww0);
        \draw (w0) -- node[right] {$3$} (v2);
        \draw (v2) -- node[below right] {$1$} (u);
        \draw (u) -- node[below left] {$-1$} (v1);
        \draw (w1) -- node[left] {$-3$} (v1);
        \draw (w1) -- node[left] {$-5$} (ww1);
        \end{tikzpicture}
        \caption{The graph $A_2$, where the number on edge $e$ represents $d(e,w_1w'_1) - d(e,w_0w'_0)$. Since the numbers on each edge are all different, it indicates that $S = \{w_0w'_0, w_1w'_1\}$ is a DRS of $L(A_2)$.}\label{a2}
        
        \end{minipage}
        \quad
        \begin{minipage}[b]{0.45\textwidth}
        \centering
        \begin{tikzpicture}
        \coordinate[label=right:{$u$}] (u) at (3,0);
        \fill (u) circle[radius=2pt];

        \foreach \i in {1,...,3}
        {
            \coordinate[label=right:{$v_\i$}] (v\i) at (1.5*\i,1.5);
            \fill (v\i) circle[radius=2pt];
        }

        \foreach \i in {0,1}
        {
            \coordinate[label=right:{$w_\i$}] (w\i) at (-1.5*\i+3,3);
            \coordinate[label=right:{$w'_\i$}] (ww\i) at (-1.5*\i+3,4.5);
            \fill (w\i) circle[radius=2pt];
            \fill (ww\i) circle[radius=2pt];
        }
        \draw (w0) -- node[right] {$5$} (ww0);
        \draw (w0) -- node[right] {$3$} (v2);
        \draw (v2) -- node[right] {$1$} (u);
        \draw (u) -- node[below left] {$-1$} (v1);
        \draw (u) -- node[below right] {$0$} (v3);
        \draw (w1) -- node[left] {$-3$} (v1);
        \draw (w1) -- node[left] {$-5$} (ww1);
        \end{tikzpicture}
        \caption[$A_3$]{The graph $A_3$, where the number on edge $e$ represents $d(e,w_1w'_1) - d(e,w_0w'_0)$. Since the numbers on each edge are all distinct, it indicates that $S = \{w_0w'_0, w_1w'_1\}$ is a DRS of $L(A_3)$.}\label{a3}
        \end{minipage}
    \end{figure}

    For the remainder of the proof, we assume $m \ge 3$. Let $e_1, e_2$ be two distinct edges in $E$. We analyze the following six cases:
    \begin{enumerate}[label=\textbf{Case~\arabic*:}~, leftmargin=0pt, labelindent=0pt, labelsep=0em, align=left]
        \item  $e_1 = w_iw'_i$, $e_2 = w_{i'}w'_{i'}$.  
        This case is straightforward, as $\{e_1, e_2\}$ can be doubly resolved by $\{e_1, e_2\}\subseteq S$.  

        \item  $e_1 = w_iw'_i$, $e_2 = w_{i'}v_j$.  
        Select any $i'' \neq i, i'$ and let $e_3 = w_{i''}w'_{i''}$. Then,  
        \[
        d(e_1, e_1) - d(e_1, e_3) = -3 < 1 - 3 \leq d(e_2, e_1) - d(e_2, e_3),
        \]  
        which shows that $\{e_1, e_2\}$ can be doubly resolved by $\{e_1, e_3\}\subseteq S$.  

        \item  $e_1 = w_iw'_i$, $e_2 = uv_j$.  
        Select any $i' \neq i$ and let $e_3 = w_{i'}w'_{i'}$. Then,  
        \[
        d(e_1, e_1) - d(e_1, e_3) = -3 < 2 - 3 \leq d(e_2, e_1) - d(e_2, e_3),
        \]  
        which shows that $\{e_1, e_2\}$ can be doubly resolved by $\{e_1, e_3\}\subseteq S$.  

        \item  $e_1 = w_iv_j$, $e_2 = w_{i'}v_{j'}$.  
        If $i \neq i'$, then let $e_3 = w_iw'_i$ and $e_4 = w_{i'}w'_{i'}$. We have 
        \[
        d(e_1, e_3) - d(e_1, e_4) \leq 1 - 2 < 2 - 1 \leq d(e_2, e_3) - d(e_2, e_4),
        \]  
        which shows that $\{e_1, e_2\}$ can be doubly resolved by $\{e_3, e_4\}\subseteq S$. If $i = i'$, then $j \neq j'$. There exists a binary digit differing between $j$ and $j'$, say $i''$, which means exactly one of $w_{i''}v_j \in E$ and $w_{i''}v_{j'} \in E$ holds. Without loss of generality, we assume that $w_{i''}v_j \in E$ and $w_{i''}v_{j'} \notin E$. Let $e_3 = w_iw'_i$ and $e_4 = w_{i''}w'_{i''}$. Then,  
        \[
        d(e_1, e_3) - d(e_1, e_4) = 1 - 2 > 1 - 3 = d(e_2, e_3) - d(e_2, e_4),
        \]  
        which shows that $\{e_1, e_2\}$ can be doubly resolved by $\{e_3, e_4\}\subseteq S$.  

            \item  $e_1 = w_iv_j$, $e_2 = uv_{j'}$.  
        Select $i'$ such that $w_{i'}v_j \notin E$, and let $e_3 = w_iw'_i$, $e_4 = w_{i'}w'_{i'}$. Then,  
        \[
        d(e_1, e_3) - d(e_1, e_4) = 1 - 3 < 2 - 3 \leq d(e_2, e_3) - d(e_2, e_4),
        \]  
        which shows that $\{e_1, e_2\}$ can be doubly resolved by $\{e_3, e_4\}\subseteq S$.  

        \item  $e_1 = uv_j$, $e_2 = uv_{j'}$.  
        Since $j \neq j'$, their binary representations must differ at some bit position, say $i$. This implies that exactly one of $w_iv_j \in E$ and $w_iv_{j'} \in E$ holds. Without loss of generality, we assume that $w_iv_j \in E$ and $w_iv_{j'} \notin E$. Additionally, there exists $i'$ such that $w_{i'}v_j \notin E$. Let $e_3 = w_iw'_i$ and $e_4 = w_{i'}w'_{i'}$. Then,  
        \[
        d(e_1, e_3) - d(e_1, e_4) = 2 - 3 < 3 - 3 \leq d(e_2, e_3) - d(e_2, e_4),
        \]  
        which shows that $\{e_1, e_2\}$ can be doubly resolved by $\{e_3, e_4\}\subseteq S$. \qed
    \end{enumerate}
\end{proof}

\section{Proof of Proposition \ref{lineupper}}\label{apx:prop:lineupper}
\begin{proof}
    Let $u$ be the center vertex of $T_k$ and $x_i,y_i$ be the other two vertices in each $K_3$. Let $S$ be the DRS of $L(T_k)$. We shall prove that $|S\cap \{ux_i,uy_i,x_iy_i\}|\ge 2$ in later. This implies that $|S|\ge 2k$. The result then follows from Theorem \ref{main}.

        We proceed by contradiction, assuming that $|S\cap \{ux_i,uy_i,x_iy_i\}|\le 1$. First, we assume that $S\cap \{ux_i,uy_i\}=\emptyset$. Note that
    \[d_{L(T_k)}(ux_i,x_iy_i)-d_{L(T_k)}(uy_i,x_iy_i)=1-1=0.\]
    For each $j\neq i$, we have
    \[d_{L(T_k)}(ux_i,ux_j)-d_{L(T_k)}(uy_i,ux_j)=1-1=0,\]
    and
    \[d_{L(T_k)}(ux_i,x_jy_j)-d_{L(T_k)}(uy_i,x_jy_j)=2-2=0.\]
    This means that $\{ux_i,uy_i\}$ cannot be doubly resolved by $S$, a contradiction.

    Now, we have $S\cap \{ux_i,uy_i\}\neq \emptyset$. Since we assumed $|S\cap \{ux_i,uy_i,x_iy_i\}|\le 1$, it must be that $|S\cap \{ux_i,uy_i\}|=1$ and $x_iy_i\notin S$. By symmetry, we can assume without loss of generality that $S\cap \{ux_i,uy_i\}=\{ux_i\}$. Note that
    \[d_{L(T_k)}(x_iy_i,ux_i)-d_{L(T_k)}(ux_i,ux_i)=1-0=1.\]
    For each $j\neq i$, we have
    \[d_{L(T_k)}(x_iy_i,ux_j)-d_{L(T_k)}(ux_i,ux_j)=2-1=1\]
    and
    \[d_{L(T_k)}(x_iy_i,x_jy_j)-d_{L(T_k)}(ux_i,x_jy_j)=3-2=1.\]
    This means that $\{x_iy_i,ux_i\}$ cannot be doubly resolved by $S$, a contradiction. \qed
\end{proof}

\end{document}